\newtheorem{theorem}{Theorem}[section]
\newtheorem{lem}[theorem]{Lemma}
\newtheorem{prop}[theorem]{Proposition}
\newtheorem{cor}[theorem]{Corollary} 
\theoremstyle{definition}
\newtheorem*{exmp}{Example}
\newtheorem*{remark}{Remark}
\date{}
\author{Patryk Jaśniewski}
\title{On homological properties of strict polynomial functors of \mbox{degree $p$}}
\begin{document}
\maketitle
\begin{abstract}
We study the homological algebra in the category $\mathcal{P}_p$ of strict polynomial functors of degree $p$ over a field of positive characteristic $p$. We determine the decomposition matrix of our category and we calculate the Ext-groups between functors important from the point of view of representation theory. Our results include computations of the Ext-algebras of simple functors and Schur functors. We observe that the category $\mathcal{P}_p$ has a Kazhdan-Lusztig theory and we show that the DG algebras computing the Ext-algebras for simple functors and Schur functors are formal. These last results allow one to  describe the bounded derived category of $\mathcal{P}_p$ as derived categories of certain explicitly described graded algebras. We also generalize our results to all blocks of $p$-weight $1$ in $\mathcal{P}_e$ for $e>p.$\newline

\noindent {\it Keywords}: block, Ext-group, formality, strict polynomial functor
\end{abstract}
\section*{Introduction}
The category $\mathcal{P}$ of strict polynomial functors over a field $k$ was introduced in \cite{fs} by Friendlander and Suslin. Intuitively, one can think about a strict polynomial functor as a functor from the category of finite dimensional vector spaces to itself, which acts on $\operatorname{Hom}$-spaces via polynomial maps. We consider the case of a field $k$ of characteristic $p > 0$. It is well-known that $\mathcal{P}$ is an abelian category.

The category $\mathcal{P}$ is the direct sum of its full subcategories $\mathcal{P}_e$ of homogeneous strict polynomial functors of degree $e.$ It is well-known that $\mathcal{P}_e$ is a highest weight category with the set of isomorphism classes of simples indexed by the poset of Young diagrams of weight $e$. The category $\mathcal{P}_e$ is an important tool in representation theory. Indeed, it is known that $\mathcal{P}_e\simeq S(n,e)-\text{mod}^\text{fin}$ where $S(n,e)-\text{mod}^\text{fin}$ is the category of finitely generated modules over the Schur algebra $S(n,e)$ with $n\geq e.$ Therefore, there is a relationship between $\mathcal{P}_e$ and the category of polynomial representations of degree $e$ of the general linear group $\operatorname{GL}_n(k)$ and between $\mathcal{P}_e$ and the category of representations of the symmetric group $\Sigma_e$ as well. It turns out that homological computations are much simpler in the category $\mathcal{P}_e$ than in the category $S(n,e)-\text{mod}^\text{fin}.$

In this paper we investigate the category $\mathcal{P}_p,$ because it is the first non-trivial case. Indeed, it is known that $\mathcal{P}_e\simeq k[\Sigma_e]-\text{mod}$ for $e<p$ and $k[\Sigma_e]$-mod is a semisimple category by the Maschke theorem. The case $e=p$ is also the starting point for the attempt of understanding the categories $\mathcal{P}_e$ for higher $e$. As we will see, some important structures organising the homological algebra in $\mathcal{P}_e$ like the Koszul and de Rham complexes or formality phenomena investigated in \cite{mc_adv}, \cite{mc_imrn} already play an important role for $e=p$. We point out here that we make use, in an essential way, of strict polynomial functors in the proofs several times. For instance, we observe in Proposition \ref{injective_hull} that the functor $S^{p-i}\otimes \Lambda^i$ is injective, which is fairly straightforward in $\mathcal{P}_p$. An explanation of that in terms of $\operatorname{GL}_n$-representations seems to be significantly more complex. We also use the sum-diagonal adjunction (see, e.g., \cite{ffss}), which is specific to functor categories and is invisible at the level of $\operatorname{GL}_n$-modules.

Let us briefly summarise the results of this paper. First of all, let us observe that the non-trivial homological algebra in the category $\mathcal{P}_p$ appears only in the block of $\mathcal{P}_p$ corresponding to the Young diagrams of shape of hooks. We  denote this block by $\mathcal{P}_p^\varnothing.$

In Section 1, after recalling some standard tools and constructions, we determine injective hulls and projective covers of simples in Proposition \ref{injective_hull} and we compute the decomposition matrix of $\mathcal{P}_p^\varnothing$ in Corollary \ref{decomp_matrix}.

In Section 2 computations of $\operatorname{Ext}^q(F_i,S_j)$, $\operatorname{Ext}^q(W_i,F_j)$, $\operatorname{Ext}^q(F_i,F_j)$, $\operatorname{Ext}^q(S_i,S_j)$, $\operatorname{Ext}^q(W_i,W_j)$, $\operatorname{Ext}^q(S_i,F_j)$, $\operatorname{Ext}^q(F_i,W_j)$ and $\operatorname{Ext}^q(S_i,W_j)$, where $S_i$ (resp. $W_i$, $F_i$) is the costandard (resp. standard, simple) functor corresponding to the hook $(i+1,1^{p-i-1})$, are performed for any $0\leq i,j \leq p-1.$ Computation of $\operatorname{Ext}^q(F_i,S_j)$ gives us  Corollary \ref{kltheory}, which states that the category $\mathcal{P}_p$ has a Kazhdan-Lusztig theory.

Restricting our attention to the block $\mathcal{P}_p^\varnothing,$ we describe, in Theorem \ref{quasiiso_ext_schurs}, the structure of the Yoneda algebra of costandard functors and we prove that the endomorphism algebra whose cohomology algebra is the Yoneda algebra of costandard functors is a formal DG algebra. This theorem implies Corollary \ref{derivedequiv_schurs}, which establishes an equivalence of triangulated categories $\mathcal{D}^b\mathcal{P}_p^\varnothing \simeq \mathcal{D}^b(\operatorname{Ext}^*(S,S)-\text{mod}^\text{gr})$ where $S=\bigoplus_{i=0}^{p-1} S_i.$ We point out that this ``full formality'' result for $\mathcal{P}_p$ seems to underlie certain ``partial formality'' phenomena observed in $\mathcal{P}_e$ for higher $e$ in \cite{mc_adv}, \cite{mc_imrn}. We hope to further explore this connection in a future work.

In a similar fashion, under the restriction to the block $\mathcal{P}_p^\varnothing$, we obtain, in Theorem \ref{quaiiso_ext_simples}, the structure of the Yoneda algebra of simple functors and the DG formality of the endomorphism algebra whose cohomology algebra is the Yoneda algebra of simple functors. We also show that there is a graded algebra isomorphism $\operatorname{Ext}^*(F_i,F_i) \simeq K[x]/(x^{p-i})$ for $0\leq i \leq p-1$ and $x$ of degree $2$. Theorem \ref{quaiiso_ext_simples} implies Corollary \ref{derivedequiv_simples}, which establishes an equivalence of triangulated categories $\mathcal{D}^b\mathcal{P}_p^\varnothing \simeq \mathcal{D}^b(\operatorname{Ext}^*(F,F)-\text{mod}^\text{gr})$ where $F=\bigoplus_{i=0}^{p-1} F_i.$ By Corollaries \ref{derivedequiv_schurs} and \ref{derivedequiv_simples} we obtain Corollary \ref{derivedequiv_schursimple} telling us that $\mathcal{D}^b(\operatorname{Ext}^*(S,S)-\text{mod}^\text{gr}) \simeq \mathcal{D}^b(\operatorname{Ext}^*(F,F)-\text{mod}^\text{gr})$ as triangulated categories. This result may be interesting for its own, since the involved graded algebras, explicitly described in Theorems \ref{quasiiso_ext_schurs} and \ref{quaiiso_ext_simples}, look at first glance quite different.

In the last section we show that the category $\mathcal{P}_p^\varnothing$ is equivalent to the block of $\mathcal{P}_{|\lambda|+p}$ corresponding to a given $p$-core $\lambda$. It provides a generalization of the results obtained in the previous sections to the case of higher degrees. In particular, this result allows one to completely describe the category $\mathcal{P}_e$ for $p < e < 2p.$ 
\section{Preliminaries}
In the first part of this section we recall definitions and facts used in this paper. One can find proofs of these facts in, e.g., \cite{abw}, \cite{donkin}, \cite{fls}, \cite{fs}, \cite{jantzen}, \cite{krause}. In the second part of the section we introduce exact sequences, which will be crucial in the next sections. We also describe injective hulls and projective covers of simples in the category $\mathcal{P}_p$ and the decomposition matrix of $\mathcal{P}_p$.

Let $\mathcal{P}_e$ be the category of homogeneous strict polynomial functors of degree $e$ over a field $k$ of positive characteristic $p$. It is well-known that $\mathcal{P}_e$ is a highest weight category in the sense of \cite{cps} with poset $P$ of Young diagrams of weight $e$ with the reversed dominance order. We now recall the definition of the poset $P$ and the construction of standard and costandard objects of this category called in this paper, respectively, Weyl and Schur functors.

Fix an integer $e>0.$ Let $P$ be the set of partitions of $e,$ i.e. of non-increasing sequences of natural numbers $\lambda=(\lambda_1,\ldots,\lambda_n)$ satisfying the condition $\sum_{i=1}^n \lambda_i=e.$ These sequences can be graphically visualized with Young diagrams of weight $e$ -- for a partition $\lambda=(\lambda_1,\ldots,\lambda_n)$ the $i$-th row has $\lambda_i$ boxes. A partial order on the set $P$ is the reversed dominance order: $\lambda=(\lambda_1,\ldots,\lambda_n)$ dominates over $\mu=(\mu_1,\ldots,\mu_m)$ if and only if $\sum_{j\leq i} \lambda_j \leq \sum_{j\leq i} \mu_j$ for all $1\leq i \leq \max\{m,n\}.$ We are now ready to describe the structure of $\mathcal{P}_e$ as a highest weight category with poset $P$ defined above. We denote the $e$-th tensor (resp. symmetric, divided, exterior) power by $I^{\otimes e}$ (resp. $S^e$, $\Gamma^e$, $\Lambda^e$). For a given diagram $\lambda=(\lambda_1,\ldots,\lambda_n)$ let $\Lambda^\lambda=\Lambda^{\lambda_1}\otimes \ldots \otimes \Lambda^{\lambda_n}, \ S^\lambda=S^{\lambda_1}\otimes \ldots \otimes S^{\lambda_n}, \ c_\lambda=c_{\lambda_1}\otimes \ldots \otimes c_{\lambda_n} : \Lambda^\lambda \to I^{\otimes e}$ and $m_\lambda=m_{\lambda_1}\otimes \ldots \otimes m_{\lambda_n}:I^{\otimes e}\to S^\lambda,$ where $c_r:\Lambda^r \to I^{\otimes r}$ is the natural monomorphism and $m_r:I^{\otimes r}\to S^r$ is the natural epimorphism. Then the Schur functor $S_\lambda$ is defined as $S_\lambda=\operatorname{im}(m_{\widetilde{\lambda}} \circ c_\lambda)$ where $\widetilde{\lambda}$ is the conjugate of $\lambda.$ The category $\mathcal{P}_e$ has the Kuhn duality being the contravariant exact functor $(-)^\#:(\mathcal{P}_e)^{\text{op}} \to \mathcal{P}_e$ defined as follows: $G^\#(V)=G(V^*)^*,$ where $G\in \mathcal{P}_e$ and $V^*$ is the dual space to $V$. Then we define the Weyl functor $W_\lambda$ as the Kuhn dual of $S_\lambda,$ i.e. $W_\lambda=(S_\lambda)^\#.$ By the definition of $S_\lambda$ the functor $W_\lambda$ can be directly expressed as the composition of the Kuhn dual maps to those given above $\Gamma^{\widetilde{\lambda}}\to I^{\otimes e} \to \Lambda^\lambda.$ Finally, the simple functor $F_\lambda$ is defined as the image of the composition of the obvious maps $W_\lambda \hookrightarrow \Lambda^\lambda \to S_\lambda.$ As examples, it is easy to derive from these definitions that $S_{(1)^e}=S^e$, $W_{(1)^e}=\Gamma^e$, $F_{(1)^e}=I^{(1)}$ and $S_{(e)}=W_{(e)}=F_{(e)}=\Lambda^e$, where $I^{(1)}$ is the Frobenius twist functor.

We now recall the definitions of the Koszul and de Rham complexes discussed in \cite{fs}. For a given $e>0$ and $0\leq i \leq e$ set $\Omega^i_e=S^{e-i}\otimes \Lambda^i.$ We refer to $i$ as the cohomological degree. The Koszul differential $\kappa_i:\Omega^i_e \to \Omega^{i-1}_e$ and the de Rham differential $d_i:\Omega^i_e \to \Omega^{i+1}_e$ are defined by using the coproduct and product operations in the symmetric and exterior algebras:
\begin{equation*}
\begin{split}
\kappa_i:S^{e-i}(V)\otimes \Lambda^i(V) \to S^{e-i}(V)\otimes V \otimes \Lambda^{i-1}(V)\to S^{e-i+1}(V)\otimes \Lambda^{i-1}(V), \\ d_j:S^{e-i}(V)\otimes \Lambda^i(V) \to S^{e-i-1}(V)\otimes V \otimes \Lambda^{i}(V)\to S^{e-i-1}(V)\otimes \Lambda^{i+1}(V).
\end{split}
\end{equation*}
The complex $(\Omega^\bullet_e,\kappa)$ is called the Koszul complex and it is well-known that this complex is acyclic. The complex $(\Omega^\bullet_e,d)$ is called the de Rham complex. The homology of the de Rham complex is described by the Cartier isomorphism $$ H^*((\Omega^*_*(V),d))\simeq \Omega^*_*(V^{(1)}),$$ where $V^{(1)}$ is the image of a vector space $V$ under the Frobenius twist functor (c.f. \cite{cartier}). The de Rham and Koszul differentials are related by the formula $d\kappa+\kappa d=e\cdot id$. In particular, if $e$ is divisible by $p$ then 
\begin{equation}
\label{contractible_omegas}
d\kappa+\kappa d=0.
\end{equation}
Let $K^i_e=\ker \kappa_i.$ The equality (\ref{contractible_omegas}) shows that the complex $(K^\bullet_{pe},d)$ is a subcomplex of $(\Omega_{pe}^\bullet,d).$ This complex will be called the Koszul kernel complex. More important for us than the Cartier isomorphism recalled above will be the restriction of this isomorphism to the Koszul kernel complex:
\begin{equation}
\label{cartier_kernels}
H^*(K^\bullet_{pe}(V))\simeq K^*_e(V^{(1)}).
\end{equation}
In the sequel we omit subscripts, since we consider only the case $e=p.$

It is known that for $e<p$ the category $\mathcal{P}_e$ is a semisimple category, hence the first non-trivial case from the point of view of homological algebra is $e=p,$ being our object of interest in this paper. In this case it is sufficient to restrict our attention to hooks, i.e. Young diagrams $(m,1^n).$ Indeed, the block structure of $\mathcal{P}_e$ is similar to that of symmetric group: the functors $S_\lambda$ and $S_\mu$ are in the same block if and only if $\lambda$ and $\mu$ have the same $p$-cores (c.f. \cite{donkin}). The statement is also true for Weyl and simple functors. For $e=p$ straightforward calculations lead to the conclusion that different diagrams, which are not hooks, have different $p$-cores and all hooks have the same $p$-core, namely the empty diagram. $\operatorname{Ext}$-groups between functors being in different blocks are the trivial groups, hence the only non-trivial homological computations appear in the block of $\mathcal{P}_p$ corresponding to hooks denoted by $\mathcal{P}^\varnothing_p.$ Let $S_i=S_{(i+1,1^{p-i-1})}$, $W_i=W_{(i+1,1^{p-i-1})}$, $F_i=F_{(i+1,1^{p-i-1})}$ for $0\leq i \leq p-1.$

In \cite{weyman} the Schur functors are defined in an equivalent way from which one can easily conclude that $S_i$ is the cokernel of the Koszul differential $\kappa_{i+2}:\Omega^{i+2} \to \Omega^{i+1}$ for all $0\leq i \leq p-2$ (c.f (2.1.3) Example (h), \cite{weyman}). For $i=p-1$ we have $S_{p-1}=\Lambda^p=\Omega^p$. By the acyclicity of the Koszul complex $(\Omega^\bullet,\kappa)$ we see that $S_i$ is indeed the kernel of the Koszul differential $\kappa_i:\Omega^{i}\to \Omega^{i-1},$ i.e. $S_i=K^i$, and we obtain the short exact sequences
\begin{equation}
\label{koszul_sh_seq}
0 \to S_i \to \Omega^i \to S_{i-1} \to 0
\end{equation}
for $0\leq i \leq p-1$, assuming $S_{-1}=S_p=0$. It is worth noting that these short exact sequences can also be derived from the Littlewood-Richardson rule. Now we describe injective hulls and projective covers of simples in the block $\mathcal{P}_p^\varnothing.$
\begin{prop}
\label{injective_hull}
Let $0\leq i \leq p-1$ and let $I_i$ and $P_i$ be, respectively, the injective hull and the projective cover of the simple functor $F_i$. Then $I_i=\Omega^i$ and $P_i=(\Omega^i)^{\#}$. In particular, $I_i\simeq P_i$ unless $i=0$.
\end{prop}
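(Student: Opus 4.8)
The plan is to establish injectivity of $\Omega^i=S^{p-i}\otimes\Lambda^i$ first, then identify it as the injective hull of $F_i$ by pinning down its socle, and finally transport everything to the projective side by Kuhn duality.

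\emph{Injectivity.} I would combine three facts: the symmetric power $S^{d}$ is injective in $\mathcal{P}_d$; the category $\mathcal{P}_i$ is semisimple for $i\le p-1<p$, so in particular $\Lambda^i$ is injective in $\mathcal{P}_i$; and a tensor product of injectives is injective (a standard consequence of the sum-diagonal adjunction mentioned in the introduction). Hence $\Omega^i=S^{p-i}\otimes\Lambda^i$ is injective, and applying the exact contravariant Kuhn duality shows $(\Omega^i)^{\#}=\Gamma^{p-i}\otimes\Lambda^i$ is projective.

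\emph{Socle.} Since an injective object with simple socle is automatically the injective hull of that socle, it suffices to prove $\operatorname{soc}(\Omega^i)=F_i$. The inclusion $F_i\subseteq\operatorname{soc}(\Omega^i)$ is immediate: the costandard $S_i$ has simple socle $F_i$, and the Koszul sequence (\ref{koszul_sh_seq}) embeds $F_i=\operatorname{soc}(S_i)\hookrightarrow S_i\hookrightarrow\Omega^i$. To bound the socle from above I would apply $\operatorname{Hom}(F_j,-)$ to (\ref{koszul_sh_seq}); as $\operatorname{Hom}(F_j,S_m)=\operatorname{Hom}(F_j,\operatorname{soc}S_m)=\delta_{jm}k$, the long exact sequence forces $\operatorname{Hom}(F_j,\Omega^i)=0$ for $j\notin\{i-1,i\}$, $\operatorname{Hom}(F_i,\Omega^i)=k$, and $\dim\operatorname{Hom}(F_{i-1},\Omega^i)\le 1$. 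Thus $\operatorname{soc}(\Omega^i)\subseteq F_i\oplus F_{i-1}$, with $F_i$ occurring exactly once.

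\emph{Ruling out $F_{i-1}$ (the crux).} For $i\ge 1$ I would use that the norm map $\Gamma^{p-i}\to S^{p-i}$ is an isomorphism, since $(p-i)!\ne 0$ in $k$, so that $\Omega^i$ is Kuhn self-dual; because $(-)^{\#}$ interchanges socle and top and fixes each $F_j$, this yields $\operatorname{top}(\Omega^i)\cong\operatorname{soc}(\Omega^i)$. I would then compute $[\Omega^i:F_{i-1}]=1$: from (\ref{koszul_sh_seq}), $[\Omega^i]=[S_i]+[S_{i-1}]$, where $[S_{i-1}:F_{i-1}]=1$ (the highest weight occurs once in its own costandard) and $[S_i:F_{i-1}]=0$ (the remaining composition factors of the costandard $S_i$ lie strictly below its highest weight, and a direct check of the reversed dominance order on hooks gives $\lambda_0>\lambda_1>\cdots>\lambda_{p-1}$). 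If $F_{i-1}$ lay in $\operatorname{soc}(\Omega^i)$ it would also lie in $\operatorname{top}(\Omega^i)$, and multiplicity one would force $F_{i-1}$ to split off as a direct summand; but then $F_{i-1}$ would be injective, which is impossible in the connected non-semisimple block $\mathcal{P}_p^{\varnothing}$. Hence $\operatorname{soc}(\Omega^i)=F_i$ and $I_i=\Omega^i$. For $i=0$ one has $S_{-1}=0$, so $\Omega^0=S_0$ has socle $F_0$ directly. Separating the socle contribution of $S_i$ from that of $S_{i-1}$ inside $\Omega^i$ is the step I expect to be the main obstacle.

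\emph{Projective cover and the final clause.} As $(-)^{\#}$ is an exact contravariant autoequivalence, it carries the injective hull of $F_i$ to the projective cover of $F_i^{\#}=F_i$ (simples being self-dual), giving $P_i=(\Omega^i)^{\#}=\Gamma^{p-i}\otimes\Lambda^i$. Finally, $I_i\cong P_i$ precisely when $\Omega^i$ is self-dual, i.e. when $S^{p-i}\cong\Gamma^{p-i}$, i.e. when the norm map in degree $p-i$ is an isomorphism, which holds for every $i\ge 1$; for $i=0$ we have $I_0=S^p\not\cong\Gamma^p=P_0$ since the norm map in degree $p$ fails to be an isomorphism (equivalently, $S^p$ and $\Gamma^p$ have distinct socles).
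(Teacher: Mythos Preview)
Your argument is sound and follows a genuinely different path from the paper's. The paper proves that $\Omega^i$ is \emph{indecomposable}: assuming a splitting $\Omega^i=G_1\oplus G_2$, it shows via Schur filtrations that $\{G_1,G_2\}=\{S_i,S_{i-1}\}$, and then eliminates this by proving that $S_j$ is not injective for $j>0$ through the dimension shift $\operatorname{Ext}^j(I^{(1)},S_j)\cong\operatorname{Ext}^{j-1}(I^{(1)},S_{j-1})$ together with $\operatorname{Hom}(I^{(1)},S^p)=k$. You instead show directly that $\Omega^i$ has \emph{simple socle}, using self-duality of $\Omega^i$ for $i\ge1$ to identify top with socle, and the purely order-theoretic inequality $\lambda_{i-1}>\lambda_i$ in the weight poset to obtain $[\Omega^i:F_{i-1}]=1$ from the highest-weight axioms alone. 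Your route avoids any Ext computation with the Frobenius twist and leans more on the abstract structure; the paper's route is more hands-on but produces as a by-product the useful auxiliary fact that no $S_j$ with $j>0$ is injective.

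One sentence needs tightening. The claim ``$F_{i-1}$ would be injective, which is impossible in the connected non-semisimple block $\mathcal{P}_p^{\varnothing}$'' is not valid as stated: a simple may well be injective in an indecomposable non-semisimple block (for instance the source simple over the path algebra of $\bullet\!\to\!\bullet$). What saves you is that you actually have more: since $\Omega^i$ is self-dual for $i\ge1$ it is projective as well as injective, so the hypothetical summand $F_{i-1}$ would be a simple that is simultaneously projective and injective. Such a simple satisfies $\operatorname{Ext}^{>0}(F_{i-1},-)=\operatorname{Ext}^{>0}(-,F_{i-1})=0$ and therefore constitutes a block by itself, contradicting the fact (already recorded in Section~1 via Donkin's $p$-core description) that all $p$ hook simples lie in the single block $\mathcal{P}_p^{\varnothing}$. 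With this amendment your proof is complete.
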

\begin{proof}
Fix $0\leq i \leq p-1.$ Since $F_i$ is a subfunctor of $S_i$ and $S_i=\ker(\kappa_i:\Omega^i\to \Omega^{i-1})$, $F_i$ is a subfunctor of $\Omega^i.$ We observe that $\Lambda^i$ is a direct summand of $I^{\otimes i}$ and $S^{(p-i,1^i)}\simeq S^{p-i}\otimes I^{\otimes i}$ is an injective functor as a direct summand of the injective functor $S^p(\operatorname{Hom}(k^p,-))$ (c.f. Theorem 2.10,\cite{fs}). Hence $\Omega^i=S^{p-i}\otimes \Lambda^i$ is also injective. To end the proof, it is sufficient to show that $\Omega^i$ is an indecomposable functor. Then $I_i=\Omega^i$ and, by the Kuhn duality, $P_i=(\Omega^i)^{\#}$. The last statement follows from the first part and the isomorphism $S^i\simeq \Gamma^i$ unless $i=0$.

The functor $\Omega^0=S^p=S_0$ is indecomposable by the axioms of a highest weight category. We now assume that $1\leq i \leq p-1$ and we suppose that $\Omega^i$ is a decomposable functor, i.e. there exist nonzero subfunctors $G_1,G_2$ of $\Omega^i$ such that $\Omega^i=G_1 \oplus G_2.$ Since a direct summand of an injective object is injective, the functors $G_1$ and $G_2$ are injective. Then it follows that $\operatorname{Ext}^1(W_\mu,G_k)=0$ for $k=1,2$ and a Young diagram $\mu$ of weight $p.$ Therefore $G_1$ and $G_2$ are filtered by Schur functors. By the long exact sequence for $\operatorname{Ext}^*(W_\mu,-)$ applied to the short exact sequence $0\to G_1\to \Omega^i \to G_2 \to 0$ and the injectivity of $G_1, G_2$ and $\Omega^i$ we obtain the short exact sequence $0 \to \operatorname{Hom}(W_\mu,G_1) \to \operatorname{Hom}(W_\mu,\Omega^i) \to \operatorname{Hom}(W_\mu,G_2) \to 0$. Let us recall that for a functor $H$ filtered by Schur functors the multiplicity $(H:S_\mu)$ of the Schur functor $S_\mu$ in a corresponding filtration of $H$ is $(H:S_\mu) = \dim \operatorname{Hom}(W_\mu,H)$. It follows from this formula, the last short exact sequence and the fact that $\Omega^i$ has the filtration by Schur functors given by (\ref{koszul_sh_seq}) that $$(G_1:S_\mu)+(G_2:S_\mu)=(\Omega^i:S_\mu)=\begin{cases} 1 \quad \text{if } \mu=(i+1,1^{p-i-1}) \lor \mu=(i,1^{p-i})\\ 0 \quad otherwise   \end{cases}.$$ Then we conclude that
\begin{equation}
\label{multiplicity_summands}
G_1=S_i, G_2=S_{i-1} \quad \text{or} \quad G_1=S_{i-1}, G_2=S_i.
\end{equation}
We claim that the only injective Schur functor in $\mathcal{P}_p^\varnothing$ is $S^p.$ Indeed, we have seen above that $S^p=\Omega^0$ is injective. By the long exact sequence for $\operatorname{Ext}^*(I^{(1)},-)$ applied to the short exact sequence (\ref{koszul_sh_seq}) and the Vanishing Theorem (c.f. Theorem 2.13, \cite{fs}, this is a special case of the sum-diagonal adjunction mentioned in the Introduction) we see that $\operatorname{Ext}^i(I^{(1)},S_i)\simeq \operatorname{Ext}^{i-1}(I^{(1)},S_{i-1})$. Since $\operatorname{Hom}(I^{(1)},S^p)=k$ (c.f. Theorem 2.10, \cite{fs}), we conclude, by induction on $i$, that $S_i$ is not injective for $i>0.$  Thus, $S^p$ is the only injective functor in $\mathcal{P}^\varnothing_p$. By (\ref{multiplicity_summands}) it leads to the contradiction with the injectivity of both the functors $G_1$ and $G_2$. In particular, $\Omega^i$ is indecomposable.
\end{proof}
\begin{cor}[The decomposition matrix]~\\
\label{decomp_matrix}
The decomposition matrix $D=(d_{lm})$ of the category $\mathcal{P}^{\varnothing}_p$ is given by 
\begin{displaymath}
d_{lm}:=[W_{l-1}:F_{m-1}]=\begin{cases} 1 & if \ m=l \ or \ m=l+1 \\ 0 & otherwise \end{cases}.
\end{displaymath} 
Moreover, for all $0\leq j \leq p-1$ the following short exact sequences hold, assuming $F_p=0$:
\begin{align}
\label{richardson_seq}
 0 \to F_{j+1} \to W_j \to F_{j} \to 0 \\
\label{richardson_seq_dual}
 0 \to F_{j} \to S_j \to F_{j+1} \to 0.
\end{align}
\end{cor}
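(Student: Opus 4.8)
The plan is to deduce the whole statement from a single reciprocity computation, since $d_{lm}=[W_{l-1}:F_{m-1}]$ is by definition a composition multiplicity and the two displayed short exact sequences merely record the internal structure of the standard and costandard functors. The identity I would establish first is $[W_\mu:F_i]=(\Omega^i:S_\mu)$ for all indices $\mu,i$. To get it, I evaluate the Hom-space $\operatorname{Hom}(W_\mu,\Omega^i)$ in two ways. On one side, Proposition \ref{injective_hull} gives that $\Omega^i=I_i$ is the injective hull of $F_i$; hence $\operatorname{Hom}(-,\Omega^i)$ is exact, and since $\operatorname{soc}(\Omega^i)=F_i$ with $\operatorname{End}(F_i)=k$ we have $\dim\operatorname{Hom}(F_\nu,\Omega^i)=\delta_{\nu i}$, so additivity along a composition series yields $\dim\operatorname{Hom}(W_\mu,\Omega^i)=[W_\mu:F_i]$. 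On the other side, $\Omega^i$ is Schur-filtered by \eqref{koszul_sh_seq}, so the multiplicity formula $(H:S_\mu)=\dim\operatorname{Hom}(W_\mu,H)$ recalled in the proof of Proposition \ref{injective_hull} gives $\dim\operatorname{Hom}(W_\mu,\Omega^i)=(\Omega^i:S_\mu)$. Comparing the two evaluations gives the desired reciprocity.

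Reading off \eqref{koszul_sh_seq}, the Schur multiplicity $(\Omega^i:S_\mu)$ equals $1$ exactly when $\mu=i$ or $\mu=i-1$ and vanishes otherwise. Thus $[W_\mu:F_i]=1$ iff $\mu\in\{i-1,i\}$, i.e. iff $i\in\{\mu,\mu+1\}$. Substituting $\mu=l-1$ and $i=m-1$ turns the condition $i\in\{\mu,\mu+1\}$ into $m\in\{l,l+1\}$, which is exactly the claimed formula for $d_{lm}$.

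For the short exact sequences I would argue as follows. The reciprocity shows that, for $0\le j\le p-2$, the standard functor $W_j$ has precisely two composition factors, each of multiplicity one, namely $F_j$ and $F_{j+1}$ (while $W_{p-1}=\Lambda^p=F_{p-1}$ is simple, in agreement with $F_p=0$). As a standard object of a highest weight category $W_j$ has simple head $F_j$ --- equivalently $F_j$ is the image of the canonical map $W_j\to S_j$ --- so the kernel of the surjection $W_j\twoheadrightarrow F_j$ is forced to be the remaining factor $F_{j+1}$, which gives \eqref{richardson_seq}. Applying the exact contravariant Kuhn duality $(-)^\#$, which fixes each simple ($F_j^\#=F_j$) and sends $W_j$ to $S_j=W_j^\#$, transforms \eqref{richardson_seq} into \eqref{richardson_seq_dual}.

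The argument is essentially forced once the reciprocity $[W_\mu:F_i]=(\Omega^i:S_\mu)$ is in place, so the main point requiring care is the first step: deriving this identity cleanly by evaluating a single Hom-space from both the injective-hull side and the Schur-filtration side, rather than quoting BGG reciprocity as a black box. A secondary subtlety is fixing the direction of the extension in \eqref{richardson_seq} --- that $F_{j+1}$ is the subobject and $F_j$ the quotient --- which I settle using the head of the standard functor rather than its socle.
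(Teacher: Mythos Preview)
Your proof is correct and follows essentially the same approach as the paper: both deduce the composition factors of $W_j$ from the Schur filtration of $\Omega^i$ given by \eqref{koszul_sh_seq} together with Proposition~\ref{injective_hull}, then read off the short exact sequence \eqref{richardson_seq} and dualize. The only difference is cosmetic: the paper invokes Brauer--Humphreys reciprocity $[W_j:F_i]=(I_i:S_j)$ as a citation to \cite{cps}, whereas you supply the standard two-line proof of that identity by computing $\dim\operatorname{Hom}(W_\mu,\Omega^i)$ from the injective-hull side and from the Schur-filtration side --- exactly as you say, ``rather than quoting BGG reciprocity as a black box.''
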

\begin{proof}
It follows from Proposition \ref{injective_hull} and the short exact sequence (\ref{koszul_sh_seq}) that $I_0\simeq S_0$ and $I_j$ is filtered by the Schur functors $S_j$ and $S_{j-1}$ for $1\leq j \leq p-1$. By the Brauer-Humphreys reciprocity (c.f. Theorem 3.11, \cite{cps}) $W_j$ has the composition series containing only two quotients, which are isomorphic to $F_j$ and $F_{j+1},$ i.e. $$ [W_j:F_i]=\begin{cases} 1 & \text{if } i=j \text{ or } i=j+1 \\ 0 & \text{otherwise} \end{cases} $$ for $0\leq i,j \leq p-1.$ It gives us immediately the decomposition matrix of $\mathcal{P}^{\varnothing}_p$ and it easily implies the following short exact sequence $$0\to F_{j+1} \to W_j \to F_j \to 0$$for $0\leq j \leq p-1$. By the Kuhn duality we obtain the second short exact sequence given in the assertion of the corollary.
\end{proof}
By the short exact sequences (\ref{richardson_seq}) and (\ref{richardson_seq_dual}) we obtain, respectively, the long exact sequence
\begin{align}
\label{koszul_res_dual}
& 0\leftarrow F_j \leftarrow W_j \leftarrow W_{j+1} \leftarrow \ldots \leftarrow W_{p-1} \leftarrow 0,\\
\label{koszul_res}
& 0\to F_j \to S_j \to S_{j+1} \to \ldots \to S_{p-1} \to 0
\end{align}
for all $0\leq j \leq p-1.$

Fix $0\leq i \leq p-1$. The long exact sequence 
\begin{equation}
\label{inj_res_schur}
0\to S_i\to \Omega^i \xrightarrow{\kappa_i} \Omega^{i-1} \xrightarrow{\kappa_{i-1}} \ldots \xrightarrow{\kappa_2} \Omega^1 \xrightarrow{\kappa_1} \Omega^0 \to 0
\end{equation}
is a minimal injective resolution of $S_i$. Indeed, we have $\operatorname{coker} \kappa_j=S_{j-2}$ for $1\leq j\leq p-1$ (assuming $S_{-1}=0$) by the acyclicity of the Koszul complex. The injective hull of $S_j$ is $\Omega^j$ by the fact that $F_j$ is the socle of $S_j$ and Proposition \ref{injective_hull}. Thus, (\ref{inj_res_schur}) is indeed the minimal injective resolution of $S_i$.

Now let us consider the following long exact sequence, which is constructed by gluing the Kuhn dual complex of the Koszul complex and the truncated Koszul complex:
\begin{multline}
\label{minimal_projres_schur}
0\to (\Omega^0)^{\#}\xrightarrow{\kappa_1^{\#}} (\Omega^1)^{\#}\to \ldots \to (\Omega^{p-2})^{\#}\xrightarrow{\kappa_{p-1}^{\#}} (\Omega^{p-1})^{\#}\xrightarrow{\kappa_p\kappa_p^{\#}} \Omega^{p-1}\xrightarrow{\kappa_{p-1}} \Omega^{p-2}\to \ldots \to\Omega^{i+2}\xrightarrow{\kappa_{i+2}} \\ \to \Omega^{i+1}\to S_i \to 0.
\end{multline}
We claim that (\ref{minimal_projres_schur}) is the minimal projective resolution of $S_i$. At this point, I would like to thank the referee for pointing out this resolution. Let us turn to the proof of our claim. Indeed, the projective cover of $S_{p-1}=F_{p-1}$ is $(\Omega^{p-1})^{\#}\simeq\Omega^{p-1}$ by Proposition \ref{injective_hull}. For $0\leq j<p-1$ we have an epimorphism $\Omega^{j+1}\to S_j$ by (\ref{koszul_sh_seq}) and we see from the proof of Proposition \ref{injective_hull} that $\Omega^{j+1}\simeq (\Omega^{j+1})^{\#}$ is projective and indecomposable, hence $\Omega^{j+1}$ is the projective cover of $S_j$. We observe that $\ker \kappa^{\#}_j=(\operatorname{coker}\kappa_j)^{\#}=(S^{j-2})^{\#}=W_{j-2}$ for $1\leq j\leq p$ (assuming $W_{-1}=0$), by the Kuhn duality and the acyclicity of the Koszul complex, and $\ker \kappa_p\kappa_p^{\#}=\ker \kappa_p^{\#}$, since $\kappa_p$ is a monomorphism. The projective cover of $W_j$ is $(\Omega^j)^{\#}$ by the fact that $F_j$ is the top of $W_j$ and Proposition \ref{injective_hull}. Thus, (\ref{minimal_projres_schur}) is indeed the minimal projective resolution of $S_i$.
\section{Additive $\operatorname{Ext}$-computations}
Since all our computations of $\operatorname{Ext}$-groups are performed in the category $\mathcal{P}_p^\varnothing$, we ease our notation by omitting subscripts in our $\operatorname{Ext}$-groups. We will use the following formula for the $\operatorname{Ext}$-groups between $W_\lambda$ and $S_\mu$ (c.f. Lemma 2.1, \cite{krause}):
\begin{equation}
\label{ext_weyl_schur}
\operatorname{Ext}^q(W_\lambda,S_\mu)=\begin{cases} k & \text{if} \ q=0 \land \lambda=\mu \\ 0 & \text{otherwise}  \end{cases}.
\end{equation}

\begin{prop} 
\label{ext_simple_schur}
If $0\leq m,n \leq p-1$ then
\begin{displaymath}
\operatorname{Ext}^q(F_m,S_n)=\begin{cases} k \quad \text{if} \ n\geq m \land q=n-m \\ 0 \quad \text{otherwise} \end{cases}.
\end{displaymath}
\end{prop}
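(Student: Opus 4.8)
The plan is to compute $\operatorname{Ext}^*(F_m, S_n)$ by resolving the simple functor $F_m$ with the long exact sequence (\ref{koszul_res_dual}), read as a resolution
\[
0 \to W_{p-1} \to W_{p-2} \to \cdots \to W_{m+1} \to W_m \to F_m \to 0,
\]
in which $W_{m+q}$ sits in homological degree $q$. The crucial observation is that, although the $W_i$ are not projective, they are acyclic for the functor $\operatorname{Hom}(-,S_n)$: by (\ref{ext_weyl_schur}) we have $\operatorname{Ext}^{>0}(W_i, S_n) = 0$ for every $i$. By the standard hyper-$\operatorname{Ext}$ (acyclic-resolution) argument, $\operatorname{Ext}^*(F_m, S_n)$ is therefore computed as the cohomology of the cochain complex obtained by applying $\operatorname{Hom}(-,S_n)$ to the resolution, namely
\[
\operatorname{Hom}(W_m, S_n) \to \operatorname{Hom}(W_{m+1}, S_n) \to \cdots \to \operatorname{Hom}(W_{p-1}, S_n).
\]

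The second step is to read off the terms from the $q=0$ case of (\ref{ext_weyl_schur}): $\operatorname{Hom}(W_i, S_n) = k$ if $i = n$ and $0$ otherwise. Hence if $n < m$ the functor $W_n$ does not occur in the resolution and the whole complex is zero, giving $\operatorname{Ext}^*(F_m, S_n) = 0$, which is the ``otherwise'' branch in that range. If $n \geq m$, the complex has a single nonzero entry, a copy of $k$ placed in cohomological degree $n-m$ (the position of $W_n$), flanked by zero groups, so the differentials into and out of it vanish and the cohomology is $k$ concentrated in degree $n-m$. This is exactly the asserted answer.

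An alternative, more self-contained route is to recurse directly with the short exact sequence (\ref{richardson_seq}). Applying $\operatorname{Hom}(-,S_n)$ to $0 \to F_{m+1} \to W_m \to F_m \to 0$ and invoking (\ref{ext_weyl_schur}) yields, whenever $m \neq n$, the vanishing of $\operatorname{Hom}(F_m,S_n)$ together with shift isomorphisms $\operatorname{Ext}^q(F_m,S_n) \cong \operatorname{Ext}^{q-1}(F_{m+1},S_n)$ for all $q \geq 1$. Iterating raises the index and lowers the degree, so it terminates either at $F_p = 0$ (when $m > n$, forcing all groups to vanish) or at the boundary case $m = n$ (when $m \leq n$). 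That boundary case is settled by the two Hom computations $\operatorname{Hom}(F_m, S_m) = k$ and $\operatorname{Hom}(F_{m+1}, S_m) = 0$, both immediate from the fact, already used in constructing the minimal injective resolution (\ref{inj_res_schur}), that the socle of $S_m$ is $F_m$.

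I expect the only genuinely delicate point to be the justification that a resolution by $\operatorname{Hom}(-,S_n)$-acyclic rather than projective objects correctly computes $\operatorname{Ext}^*(F_m,S_n)$. Once the Ext-vanishing (\ref{ext_weyl_schur}) is in hand this is a routine homological fact, but it is the step carrying the real content, since everything else reduces to the bookkeeping of a single one-dimensional Hom-space and the determination of its cohomological position $n-m$.
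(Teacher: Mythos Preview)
Your proof is correct and follows essentially the same approach as the paper: resolve $F_m$ by the Weyl functors via (\ref{koszul_res_dual}), use (\ref{ext_weyl_schur}) to see that the $W_j$ are $\operatorname{Hom}(-,S_n)$-acyclic, and read off the cohomology of the resulting complex with a single nonzero term. Your alternative inductive argument via the short exact sequence (\ref{richardson_seq}) is also valid and amounts to an unwinding of the same acyclic-resolution computation.
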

\begin{proof}
We have by (\ref{ext_weyl_schur}) that $R^qG(W_j)=0$ for $q>0$, $0\leq j \leq p-1$ and $G=\operatorname{Hom}(-,S_n).$ In other words, the functors $W_j$ are $G$-acyclic. By that and the long exact sequence (\ref{koszul_res_dual}) we deduce that $\operatorname{Ext}^q(F_m,S_n)=R^q\operatorname{Hom}(W_{\bullet},S_n)$, where $\operatorname{Hom}(W_\bullet, S_n)$ is the complex obtained by applying the functor $\operatorname{Hom}(-,S_n)$ to the resolution of $F_m$ (\ref{koszul_res_dual}) and erasing the first term. Then the assertion of the proposition follows from (\ref{ext_weyl_schur}).
\end{proof}
\begin{remark}
One can also use in the proof of Proposition \ref{ext_simple_schur} the injective resolution of $S_n$ (\ref{inj_res_schur}) instead of the resolution of $F_m$ by Weyl functors (\ref{koszul_res_dual}).
\end{remark}
\begin{cor}
\label{ext_weyl_simple}
If $0\leq m,n \leq p-1$ then
\begin{displaymath}
\operatorname{Ext}^q(W_m,F_n)=\begin{cases} k \quad \text{if} \ m\geq n \land q=m-n \\ 0 \quad otherwise \end{cases}.
\end{displaymath}
\end{cor}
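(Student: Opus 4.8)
The plan is to deduce this corollary directly from Proposition \ref{ext_simple_schur} by applying the Kuhn duality $(-)^\#$, rather than redoing the resolution computation. Since $(-)^\#:(\mathcal{P}_p)^{\text{op}}\to\mathcal{P}_p$ is a contravariant exact equivalence (an involution up to natural isomorphism), it induces for all objects $A,B$ and all $q$ an isomorphism
\begin{equation*}
\operatorname{Ext}^q(A,B)\simeq\operatorname{Ext}^q(B^\#,A^\#).
\end{equation*}
Taking $A=W_m$ and $B=F_n$ yields $\operatorname{Ext}^q(W_m,F_n)\simeq\operatorname{Ext}^q(F_n^\#,W_m^\#)$.

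Next I would record the three identifications of objects under $(-)^\#$ needed to rewrite the right-hand side. First, by definition $W_\lambda=(S_\lambda)^\#$, and since $(-)^\#$ is an involution we get $W_m^\#=(S_m^\#)^\#\simeq S_m$. Second, the simple functors are self-dual: $F_n^\#\simeq F_n$. This follows from the defining factorization $W_n\hookrightarrow\Lambda^n\to S_n$ with $F_n$ its image, because applying the exact contravariant functor $(-)^\#$ reverses this to $S_n\hookrightarrow(\Lambda^n)^\#\to W_n$ (using $(\Lambda^n)^\#\simeq\Lambda^n$) and hence identifies $F_n^\#$ with the image of $W_n\to S_n$, which is again $F_n$; alternatively this is the standard fact that in a highest weight category equipped with a duality fixing the simples, every simple is self-dual. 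Combining these gives
\begin{equation*}
\operatorname{Ext}^q(W_m,F_n)\simeq\operatorname{Ext}^q(F_n,S_m).
\end{equation*}

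Finally I would invoke Proposition \ref{ext_simple_schur} with the indices $(m,n)$ there replaced by $(n,m)$: the right-hand side is $k$ exactly when $m\geq n$ and $q=m-n$, and $0$ otherwise, which is precisely the claimed formula. I do not expect any serious obstacle here, as the argument is purely formal once the duality isomorphism on $\operatorname{Ext}$-groups is in hand; the only point requiring a little care is the bookkeeping of which functor goes where under the contravariant $(-)^\#$ (in particular confirming $W_m^\#\simeq S_m$ and the self-duality $F_n^\#\simeq F_n$) so that the index condition $n\geq m$ of Proposition \ref{ext_simple_schur} correctly transposes to the condition $m\geq n$ of the corollary.
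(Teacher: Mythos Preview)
Your proposal is correct and takes essentially the same approach as the paper, which simply states that the corollary follows from Proposition~\ref{ext_simple_schur} by the Kuhn duality. You have merely spelled out in more detail the identifications $W_m^\#\simeq S_m$ and $F_n^\#\simeq F_n$ that justify the isomorphism $\operatorname{Ext}^q(W_m,F_n)\simeq\operatorname{Ext}^q(F_n,S_m)$.
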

\begin{proof}
The assertion of the corollary follows from Proposition \ref{ext_simple_schur} by the Kuhn duality.
\end{proof}
We recall that a highest weight category $\mathcal{C}$ with poset $P$ has a Kazhdan-Lusztig theory relative to a length function $l:P\to \mathbb{Z}$ if and only if the following conditions hold:
\begin{equation*}
\begin{split}
\operatorname{Ext}^i(F_\lambda,S_\mu)\neq 0 \implies i \equiv l(\lambda)-l(\mu) \ \text{mod } 2, \\
\operatorname{Ext}^i(W_\lambda,F_\mu)\neq 0 \implies i \equiv l(\lambda)-l(\mu) \ \text{mod } 2
\end{split}
\end{equation*}
for any $\lambda,\mu \in P$ (c.f. Theorem 2.4, \cite{cps_kazlus}).
\begin{cor}
\label{kltheory}
The category $\mathcal{P}_p$ has a Kazhdan-Lusztig theory relative to the function $l$ given by $l((i+1,1^{p-i-1}))=i$ and $l(\lambda)=0$ for a Young diagram $\lambda$ not being a hook.
\end{cor}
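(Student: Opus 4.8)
The plan is to verify the two defining conditions of a Kazhdan--Lusztig theory directly from the $\operatorname{Ext}$-computations already established. The only subtlety is that the theory must be checked for the whole category $\mathcal{P}_p$, whereas Proposition \ref{ext_simple_schur} and Corollary \ref{ext_weyl_simple} only concern the block $\mathcal{P}_p^\varnothing$. I would first dispose of the off-hook and cross-block contributions: if $\lambda$ or $\mu$ is a diagram of weight $p$ that is not a hook, then it lies in a singleton block (every non-hook partition of $p$ has its own distinct $p$-core), and moreover each such block is semisimple. Hence $\operatorname{Ext}^i(F_\lambda,S_\mu)$ and $\operatorname{Ext}^i(W_\lambda,F_\mu)$ vanish identically for $i>0$ once $\lambda$ and $\mu$ are not both hooks, and for $i=0$ they can only be nonzero when $\lambda=\mu$, in which case $l(\lambda)-l(\mu)=0\equiv 0$. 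So the parity condition is automatic off the hook block, and it remains to treat $\lambda=(m+1,1^{p-m-1})$, $\mu=(n+1,1^{p-n-1})$ inside $\mathcal{P}_p^\varnothing$, where $l(\lambda)=m$ and $l(\mu)=n$.

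Next I would simply read off the required congruences from the two computations. For the first condition, Proposition \ref{ext_simple_schur} gives $\operatorname{Ext}^q(F_m,S_n)\neq 0$ precisely when $n\geq m$ and $q=n-m$; in that case $q=n-m=l(\mu)-l(\lambda)$, which differs from $l(\lambda)-l(\mu)$ by an even integer, so indeed $q\equiv l(\lambda)-l(\mu)\pmod 2$. For the second condition, Corollary \ref{ext_weyl_simple} gives $\operatorname{Ext}^q(W_m,F_n)\neq 0$ precisely when $m\geq n$ and $q=m-n=l(\lambda)-l(\mu)$, so the congruence holds on the nose. In both cases the nonvanishing degree equals $|l(\lambda)-l(\mu)|$, which is certainly the right parity, so both implications are satisfied.

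I expect the genuinely substantive content to be entirely contained in the already-proved $\operatorname{Ext}$-propositions, so the remaining work is bookkeeping rather than a real obstacle. The one point deserving care is the claim that the length function is well defined and consistent across all of $\mathcal{P}_p$, not just on the hook block: I would note that the definition $l((i+1,1^{p-i-1}))=i$ and $l(\lambda)=0$ otherwise assigns a value to every vertex of the poset $P$, and that the Kazhdan--Lusztig conditions are vacuous across distinct blocks because the relevant $\operatorname{Ext}$-groups vanish there. Thus the verification reduces, block by block, to the hook block, where it follows from Proposition \ref{ext_simple_schur} and Corollary \ref{ext_weyl_simple} as above. The only thing to double-check is that $l$ does take integer values (it does, $0\leq i\leq p-1$) and that the parity matching is symmetric in $\lambda,\mu$, which it is since $q\equiv\pm(l(\lambda)-l(\mu))$ in each case.
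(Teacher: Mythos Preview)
Your proposal is correct and follows essentially the same approach as the paper: the paper's proof simply states that the assertion follows immediately from the block structure of $\mathcal{P}_p$ described in Section~1 together with Proposition~\ref{ext_simple_schur} and Corollary~\ref{ext_weyl_simple}, and your argument is precisely an unpacking of that sentence. Your additional care in checking the off-block and non-hook cases explicitly is exactly what the paper's phrase ``using the block structure'' is gesturing at.
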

\begin{proof}
The assertion of the corollary follows immediately by using the block structure of $\mathcal{P}_p$ described in Section 1, Proposition \ref{ext_simple_schur} and Corollary \ref{ext_weyl_simple}.
\end{proof}
\begin{cor}
\label{ext_simples}
If $0\leq m,n \leq p-1$ then
\begin{displaymath}
\operatorname{Ext}^q(F_m,F_n)=\begin{cases} k \quad \text{if} \ q=|m-n|+2r, \ \text{where} \ 0\leq r \leq p-\max\{m,n\}-1 \\ 0 \quad \text{otherwise} \end{cases}.
\end{displaymath}
\end{cor}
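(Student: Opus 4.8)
The plan is to compute $\operatorname{Ext}^*(F_m,F_n)$ by resolving the first argument through the Weyl resolution (\ref{koszul_res_dual}) and running the associated hyper-Ext spectral sequence, feeding in the groups $\operatorname{Ext}^*(W_j,F_n)$ already determined in Corollary \ref{ext_weyl_simple}. Since the Weyl functors are not projective, I would pass to $\mathcal{D}^b\mathcal{P}_p^\varnothing$: the exact sequence (\ref{koszul_res_dual}) exhibits $F_m$ as quasi-isomorphic to the complex with $W_{m+s}$ placed in homological degree $s$ for $0\le s\le p-1-m$, so there is a first-quadrant spectral sequence
\[
E_1^{s,t}=\operatorname{Ext}^t(W_{m+s},F_n)\ \Longrightarrow\ \operatorname{Ext}^{s+t}(F_m,F_n),
\]
whose $d_1$ is induced by the differential of (\ref{koszul_res_dual}) and whose higher differentials satisfy $d_r\colon E_r^{s,t}\to E_r^{s+r,t-r+1}$, each raising total degree by one.

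Next I would read off the $E_1$-page from Corollary \ref{ext_weyl_simple}: the entry $E_1^{s,t}$ equals $k$ exactly when $m+s\ge n$ and $t=(m+s)-n$, and is zero otherwise. Thus all nonzero entries lie on the single line $t=m+s-n$, with $\max\{0,n-m\}\le s\le p-1-m$, each one-dimensional, and the total degree of such an entry is $s+t=2s+(m-n)$. In particular every nonzero entry sits in total degree congruent to $m-n\bmod 2$, and distinct values of $s$ give distinct total degrees.

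The key point is that this spectral sequence degenerates at $E_1$. Each differential raises total degree by exactly one, hence sends every potentially nonzero entry into a total degree of the opposite parity, where the page vanishes identically; therefore $d_r=0$ for all $r\ge 1$. This is precisely the parity phenomenon underlying the Kazhdan–Lusztig theory of Corollary \ref{kltheory}, and I regard establishing this degeneration as the only genuine content of the argument — everything else is bookkeeping. Since each total degree contains at most one surviving entry, there are no extension problems, and I conclude that $\operatorname{Ext}^q(F_m,F_n)=k$ exactly when $q=2s+(m-n)$ for some $s$ with $\max\{0,n-m\}\le s\le p-1-m$, and is zero otherwise.

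Finally I would rewrite this index set in the symmetric form of the statement: putting $r=s-\max\{0,n-m\}$ converts the condition into $q=|m-n|+2r$ with $0\le r\le (p-1-m)-\max\{0,n-m\}=p-\max\{m,n\}-1$, which is exactly the claimed formula (and, for $m=n$, reproduces the degrees $0,2,\dots,2(p-m-1)$ consistent with $\operatorname{Ext}^*(F_m,F_m)\simeq K[x]/(x^{p-m})$). As an alternative avoiding spectral sequences, one could instead induct downward on $n$ with $m$ fixed, base case $n=p-1$ where $F_{p-1}=S_{p-1}$, by applying $\operatorname{Hom}(F_m,-)$ to the short exact sequence (\ref{richardson_seq_dual}) and invoking Proposition \ref{ext_simple_schur}; there the only delicate degrees are $q=n-m$ and $q=n-m+1$, while all others are governed by degree-shifting isomorphisms arising from the vanishing of $\operatorname{Ext}^*(F_m,S_n)$ outside a single cohomological degree.
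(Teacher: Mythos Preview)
Your argument is correct. The hyper-Ext spectral sequence for the Weyl resolution (\ref{koszul_res_dual}) is set up properly, the identification of the $E_1$-page via Corollary \ref{ext_weyl_simple} is accurate, and the degeneration by parity is valid: all nonzero entries sit on the single line $t=s+(m-n)$, so every $d_r$ lands in a zero group. The reindexing $r=s-\max\{0,n-m\}$ at the end is also right.

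Your route differs from the paper's. The paper does not run a spectral sequence itself; instead it quotes the abstract Kazhdan--Lusztig machinery of Cline--Parshall--Scott (Theorem~3.5 of \cite{cps_kazlus}), which for any highest weight category with a KL theory yields the closed dimension formula
\[
\dim\operatorname{Ext}^q(F_m,F_n)=\sum_{i}\sum_{q_1+q_2=q}\dim\operatorname{Ext}^{q_1}(F_m,S_i)\cdot\dim\operatorname{Ext}^{q_2}(W_i,F_n),
\]
and then plugs in Proposition \ref{ext_simple_schur} and Kuhn duality. So the paper trades the spectral-sequence bookkeeping for a single external citation, at the cost of invoking a general theorem whose proof is itself a parity/degeneration argument of the same flavor as yours. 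Your approach is more self-contained and makes the mechanism visible; the paper's is shorter on the page and situates the computation within the general CPS framework. The parity degeneration you isolate is exactly what the CPS theorem packages abstractly, so the two arguments are close cousins rather than genuinely independent proofs.
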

\begin{proof}
If a highest weight category $\mathcal{C}$ with poset $P$ has a Kazhdan-Lusztig theory then the following equality holds: 
\begin{equation}
\label{kl_eq}
\dim \operatorname{Ext}^q(F_\lambda,F_\mu)=\sum_{\tau\in P} \sum_{i+j=q} \dim \operatorname{Ext}^i(F_\lambda,S_\tau)\cdot \dim\operatorname{Ext}^j(W_\tau,F_\mu)
\end{equation}
for $\lambda,\mu\in P$ (c.f. Theorem 3.5, \cite{cps_kazlus}). The category $\mathcal{P}_p$ has a Kazhdan-Lusztig theory by Corollary \ref{kltheory}. In view of the block structure of $\mathcal{P}_p$ we restrict the poset of that category to the poset of hooks. By the Kuhn duality $\operatorname{Ext}^q(W_i,F_j)=\operatorname{Ext}^q(F_j,S_i)$ for $0\leq i,j \leq p-1$. Thus, in the case of our category we can rewrite the equality (\ref{kl_eq}) in the following form:
\begin{displaymath}
\dim \operatorname{Ext}^q(F_m,F_n)=\sum_{0\leq i \leq p-1} \sum_{q_1+q_2=q} \dim\operatorname{Ext}^{q_1}(F_m,S_i)\cdot \dim\operatorname{Ext}^{q_2}(F_n,S_i).
\end{displaymath}
If $i<m$ or $i<n$ then $\dim \operatorname{Ext}^{q_1}(F_m,S_i)\cdot \dim \operatorname{Ext}^{q_2}(F_n,S_i) = 0$ by Proposition \ref{ext_simple_schur}. If $i\geq m$ and $i\geq n,$ i.e. $i\geq \max\{m,n\},$ then $\dim\operatorname{Ext}^{q_1}(F_m,S_i)\cdot\dim \operatorname{Ext}^{q_2}(F_n,S_i) \neq 0$ if and only if $q_1=i-m \text{ and } q_2=i-n$, by Proposition \ref{ext_simple_schur}. By the above observations we obtain
\begin{equation}
\label{kl_eqpol2}
\begin{split}
& \dim\operatorname{Ext}^q(F_m,F_n)=\sum_{0\leq i \leq p-1} \sum_{q_1+q_2=q} \dim\operatorname{Ext}^{q_1}(F_m,S_i)\cdot\dim\operatorname{Ext}^{q_2}(F_n,S_i)=\\& =\begin{cases} 1 & \text{if} \ q_1=i-m, \ q_2=i-n, \ \text{i.e.} \ q=2i-m-n \text{ for } \max\{m,n\}\leq i \leq p-1 \\ 0 & \text{otherwise} \end{cases}.
\end{split}
\end{equation} 
We note that $2i-m-n=\max\{m,n\}-\min\{m,n\}+2(i-\max\{m,n\})=|m-n|+2(i-\max\{m,n\}),$ hence $q=|m-n|+2r$ for $0\leq r \leq p-\max\{m,n\}-1.$ The assertion of the corollary follows from (\ref{kl_eqpol2}) and the last remark.
\end{proof}
\begin{lem}
\label{hom_schur_omega}
If $0\leq m,n \leq p-1$ then
\begin{displaymath}
\operatorname{Hom}(S_m,\Omega^n)=\operatorname{Hom}((\Omega^n)^{\#},W_m)=\begin{cases} k \quad \text{if } n=m \lor n=m+1 \\ 0 \quad \text{otherwise} \end{cases}.
\end{displaymath}
\end{lem}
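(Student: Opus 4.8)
The plan is to reduce everything to a socle computation, exploiting that $\Omega^n$ is injective. First I would dispose of the second equality for free: the Kuhn duality $(-)^{\#}$ is an exact contravariant self-equivalence of $\mathcal{P}_p$, so it induces natural isomorphisms $\operatorname{Hom}(A,B)\cong\operatorname{Hom}(B^{\#},A^{\#})$. Applying this with $A=S_m$ and $B=\Omega^n$ and recalling $W_m=(S_m)^{\#}$ gives $\operatorname{Hom}(S_m,\Omega^n)\cong\operatorname{Hom}((\Omega^n)^{\#},W_m)$. Hence it is enough to compute the single group $\operatorname{Hom}(S_m,\Omega^n)$.

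The crucial input is Proposition \ref{injective_hull}, which tells us that $\Omega^n=I_n$ is the injective hull of $F_n$. Since the socle of an injective hull coincides with the socle of the object it envelops, this means $\operatorname{soc}(\Omega^n)=F_n$ is simple. Because any nonzero map from a simple functor into $\Omega^n$ has image inside $\operatorname{soc}(\Omega^n)$, and the $F_i$ are pairwise non-isomorphic simples with $\operatorname{End}(F_i)=k$, I would first record the elementary identity
\[
\operatorname{Hom}(F_m,\Omega^n)=\operatorname{Hom}(F_m,F_n)=\begin{cases} k & \text{if } m=n \\ 0 & \text{otherwise} \end{cases}.
\]

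Next I would feed in the structure of $S_m$ via the short exact sequence (\ref{richardson_seq_dual}), namely $0\to F_m\to S_m\to F_{m+1}\to 0$, and apply the contravariant functor $\operatorname{Hom}(-,\Omega^n)$. Since $\Omega^n$ is injective, all the groups $\operatorname{Ext}^1(-,\Omega^n)$ vanish, so the associated long exact sequence collapses to the short exact sequence
\[
0\to\operatorname{Hom}(F_{m+1},\Omega^n)\to\operatorname{Hom}(S_m,\Omega^n)\to\operatorname{Hom}(F_m,\Omega^n)\to 0.
\]
Taking dimensions and inserting the formula from the previous paragraph yields $\dim\operatorname{Hom}(S_m,\Omega^n)=\delta_{n,m}+\delta_{n,m+1}$, which is $1$ precisely when $n=m$ or $n=m+1$ and $0$ otherwise. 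The edge case $m=p-1$ is absorbed automatically, since there $F_{m+1}=F_p=0$ and the sequence degenerates to $\operatorname{Hom}(S_{p-1},\Omega^n)=\operatorname{Hom}(F_{p-1},\Omega^n)$.

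The computation is short, and the only delicate point is the identification $\operatorname{soc}(\Omega^n)=F_n$: this is where Proposition \ref{injective_hull} does the real work. Without it one would be tempted to compute through the Schur filtration of $\Omega^n$ given by (\ref{koszul_sh_seq}), in which $F_n$ occurs as a composition factor with multiplicity two, making a naive count of $\operatorname{Hom}(F_m,\Omega^n)$ ambiguous; the injectivity, which forces everything to factor through the simple socle, is exactly what resolves this.
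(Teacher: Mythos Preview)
Your proof is correct and follows essentially the same route as the paper: both arguments obtain the second equality from Kuhn duality, compute $\operatorname{Hom}(F_m,\Omega^n)=\delta_{m,n}\,k$ using that $\Omega^n$ is the injective hull of $F_n$ (Proposition~\ref{injective_hull}), and then apply the exact functor $\operatorname{Hom}(-,\Omega^n)$ to the short exact sequence~(\ref{richardson_seq_dual}) to read off the dimension. Your write-up is slightly more explicit about the socle reasoning behind $\operatorname{Hom}(F_m,\Omega^n)$, but there is no substantive difference in strategy.
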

\begin{proof}
We have $S_{p-1}=F_{p-1}$, hence $\dim\operatorname{Hom}(S_{p-1},\Omega^n)=\dim\operatorname{Hom}(F_{p-1},\Omega^n)=\delta_{p-1,n}$. In case $m<p-1$ we apply the exact functor $\operatorname{Hom}(-,\Omega^n)$ to the short exact sequence (\ref{richardson_seq_dual}) for $j=m$. Then we indeed have $$\dim \operatorname{Hom}(S_m,\Omega^n)=\dim\operatorname{Hom}(F_{m+1},\Omega^n)+\dim\operatorname{Hom}(F_m,\Omega^n)=\begin{cases} 1 \quad \text{if } n=m \lor n=m+1 \\ 0 \quad \text{otherwise}  \end{cases}.$$The equality $\operatorname{Hom}(S_m,\Omega^n)=\operatorname{Hom}((\Omega^n)^{\#},W_m)$ is immediate by the Kuhn duality.
\end{proof}
\begin{prop}
\label{ext_schur}
If $0\leq m,n \leq p-1$ then
\begin{displaymath}
\operatorname{Ext}^q(S_m,S_n)=\begin{cases} k \quad if \ (n=m \land q=0) \lor (n>m \land (q=n-m-1 \lor q=n-m)) \\ 0 \quad otherwise \end{cases}.
\end{displaymath}
\end{prop}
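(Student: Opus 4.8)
The plan is to substitute the minimal injective resolution (\ref{inj_res_schur}) of $S_n$ into the functor $\operatorname{Hom}(S_m,-)$ and to read off the cohomology from Lemma \ref{hom_schur_omega}, which was established precisely for this purpose. After deleting the augmentation term, the resulting cochain complex $\operatorname{Hom}(S_m,\Omega^n)\to\operatorname{Hom}(S_m,\Omega^{n-1})\to\cdots\to\operatorname{Hom}(S_m,\Omega^0)$ carries $\operatorname{Hom}(S_m,\Omega^{n-q})$ in cohomological degree $q$ for $0\le q\le n$. By Lemma \ref{hom_schur_omega} such a term equals $k$ exactly when $n-q\in\{m,m+1\}$, that is when $q=n-m-1$ or $q=n-m$, and vanishes otherwise. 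This already settles two cases. If $m>n$, then no term survives and $\operatorname{Ext}^q(S_m,S_n)=0$ for all $q$. If $m=n$, then only the degree $q=0$ term survives (the index $q=n-m-1=-1$ being out of range), so $\operatorname{Ext}^0(S_m,S_m)=\operatorname{Hom}(S_m,\Omega^m)=k$ and all higher groups vanish; both outcomes match the stated formula.

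The remaining case $n>m$ is where the real content lies, and I expect it to be the main obstacle. Here exactly two \emph{consecutive} terms survive, a copy of $k$ in degrees $n-m-1$ and $n-m$, linked by the differential $(\kappa_{m+1})_{*}\colon\operatorname{Hom}(S_m,\Omega^{m+1})\to\operatorname{Hom}(S_m,\Omega^m)$ induced by $\kappa_{m+1}\colon\Omega^{m+1}\to\Omega^m$. The whole proposition reduces to showing that this differential is zero: if it is, both copies of $k$ survive to cohomology and we obtain $\operatorname{Ext}^{n-m-1}(S_m,S_n)=\operatorname{Ext}^{n-m}(S_m,S_n)=k$ with all other groups vanishing. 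I would argue the vanishing as follows. Let $\phi$ span $\operatorname{Hom}(S_m,\Omega^{m+1})$ and let $\iota\colon S_m\hookrightarrow\Omega^m$ be the canonical inclusion, which spans the one-dimensional space $\operatorname{Hom}(S_m,\Omega^m)$. Then $\kappa_{m+1}\circ\phi$ lies in $\operatorname{Hom}(S_m,\Omega^m)=k\iota$, so $\kappa_{m+1}\circ\phi=\lambda\iota$ for some scalar $\lambda$, and it suffices to prove $\lambda=0$. Were $\lambda\neq 0$, the map $\lambda\iota$, and hence $\phi$, would be a monomorphism; but a monomorphism $S_m\hookrightarrow\Omega^{m+1}$ would carry the simple socle $\operatorname{soc}S_m=F_m$ into $\operatorname{soc}\Omega^{m+1}=F_{m+1}$ (recall that $\Omega^{m+1}$ is the injective hull of $F_{m+1}$ by Proposition \ref{injective_hull}), forcing $F_m\cong F_{m+1}$, which is absurd. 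Hence $\lambda=0$ and the differential vanishes.

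A second, more formal route sidesteps the differential entirely, and I would keep it in reserve as a cross-check. Applying the exact functor $\operatorname{Hom}(-,S_n)$ to the short exact sequence (\ref{richardson_seq_dual}), $0\to F_m\to S_m\to F_{m+1}\to 0$, and invoking Proposition \ref{ext_simple_schur}, one knows that $\operatorname{Ext}^{\bullet}(F_m,S_n)$ is one-dimensional, concentrated in degree $n-m$ (when $n\ge m$), while $\operatorname{Ext}^{\bullet}(F_{m+1},S_n)$ is one-dimensional, concentrated in degree $n-m-1$ (when $n\ge m+1$). In the long exact sequence these two nonzero contributions sit in degrees that are never joined by a connecting homomorphism—the connecting map out of $\operatorname{Ext}^{n-m}(F_m,S_n)$ would land in $\operatorname{Ext}^{n-m+1}(F_{m+1},S_n)=0$—so every connecting map vanishes and the sequence breaks into short pieces that read off $\dim\operatorname{Ext}^q(S_m,S_n)=\dim\operatorname{Ext}^q(F_{m+1},S_n)+\dim\operatorname{Ext}^q(F_m,S_n)$, giving the same answer. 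The potential pitfall in this alternative is exactly the bookkeeping that no connecting homomorphism is forced to be nonzero, which the degree count above resolves.
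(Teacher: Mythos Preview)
Your proof is correct and follows the same overall framework as the paper: apply $\operatorname{Hom}(S_m,-)$ to the minimal injective resolution (\ref{inj_res_schur}), invoke Lemma~\ref{hom_schur_omega} to locate the two surviving terms in degrees $n-m-1$ and $n-m$, and then check that the single connecting differential $(\kappa_{m+1})_*$ between them vanishes. The only substantive difference lies in how that vanishing is verified. The paper names an explicit generator of $\operatorname{Hom}(S_m,\Omega^{m+1})$, namely $d_m\iota_m$, and computes directly using the identity (\ref{contractible_omegas}) that $\kappa_{m+1}d_m\iota_m=-d_{m-1}\kappa_m\iota_m=0$ since $\iota_m$ lands in $\ker\kappa_m$. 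Your argument is instead structural: a nonzero differential would force the generator $\phi$ to be a monomorphism $S_m\hookrightarrow\Omega^{m+1}$, hence an embedding of socles $F_m\hookrightarrow F_{m+1}$, which is impossible. Both are short and clean; the paper's version has the slight advantage that it identifies the generator explicitly (information reused later in the multiplicative analysis of Section~3), while yours avoids having to know that $d_m\iota_m\neq 0$.

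Your alternative route via the long exact sequence of (\ref{richardson_seq_dual}) and Proposition~\ref{ext_simple_schur} is genuinely different from anything in the paper and is also correct: the degree bookkeeping you give shows that every connecting homomorphism has zero target, so the dimensions simply add. This is arguably the most efficient proof of the proposition, though it gives no handle on explicit representatives of the $\operatorname{Ext}$-classes.
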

\begin{proof}
We use the injective resolution of $S_n$ (\ref{inj_res_schur}), hence $\operatorname{Ext}^q(S_m,S_n)=H^q(\operatorname{Hom}(S_m,\Omega^\bullet))$, where $\operatorname{Hom}(S_m,\Omega^\bullet)$ is the complex obtained by applying the functor $\operatorname{Hom}(S_m,-)$ to the resolution of $S_n$ (\ref{inj_res_schur}) with the removed first term.  We will now show that in case $m<n$ the differential $\operatorname{Hom}(S_m,\Omega^{m+1})=k\xrightarrow{\kappa_{m+1}\circ -} \operatorname{Hom}(S_m,\Omega^m)=k$ is the zero map. The map $d_m\imath_m$, where $\imath_m$ is the inclusion $S_m \hookrightarrow \Omega^m$, generates $\operatorname{Hom}(S_m,\Omega^{m+1})$. The considered differential sends an element $c\cdot d_m\imath_m $ with $c\in k$ to the element $c\cdot \kappa_{m+1}d_m\imath_m=-c\cdot d_{m-1}\kappa_{m}\imath_m=0$, i.e. this differential is indeed the zero map. We used here (\ref{contractible_omegas}) in the first equality. Then the assertion of the proposition follows from Lemma \ref{hom_schur_omega} and the above observation.
\end{proof}
\begin{cor}
\label{ext_weyls}
If $0\leq m,n \leq p-1$ then
\begin{displaymath}
\operatorname{Ext}^q(W_m,W_n)=\begin{cases} k \quad if \ (n=m \land q=0) \lor (n<m \land (q=m-n-1 \lor q=m-n)) \\ 0 \quad otherwise \end{cases}.
\end{displaymath}
\end{cor}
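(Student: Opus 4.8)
The plan is to obtain this immediately from Proposition \ref{ext_schur} by Kuhn duality, in the same spirit as Corollary \ref{ext_weyl_simple} was deduced from Proposition \ref{ext_simple_schur}. Recall that $(-)^{\#}$ is a contravariant exact self-equivalence of $\mathcal{P}_p$ satisfying $(-)^{\#\#}\simeq\operatorname{id}$, and that $W_i=(S_i)^{\#}$ by definition. Since applying a contravariant exact equivalence to an exact sequence $0\to B\to E_1\to\cdots\to E_q\to A\to 0$ representing a Yoneda class yields the sequence $0\to A^{\#}\to E_q^{\#}\to\cdots\to E_1^{\#}\to B^{\#}\to 0$, which represents a class of the same cohomological degree $q$ but with the two arguments reversed, one gets a natural isomorphism $\operatorname{Ext}^q(A,B)\simeq\operatorname{Ext}^q(B^{\#},A^{\#})$ for all $A,B\in\mathcal{P}_p$ and all $q$.

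Applying this with $A=W_m$ and $B=W_n$, and using $W_i=(S_i)^{\#}$ together with $(-)^{\#\#}\simeq\operatorname{id}$, I would obtain $\operatorname{Ext}^q(W_m,W_n)\simeq\operatorname{Ext}^q(S_n,S_m)$. Feeding $\operatorname{Ext}^q(S_n,S_m)$ into the formula of Proposition \ref{ext_schur} with the roles of the two indices interchanged shows that this group is $k$ precisely when $m=n$ and $q=0$, or when $m>n$ and $q\in\{m-n-1,m-n\}$, and is zero otherwise. Since $m>n$ is the same condition as $n<m$, and the degrees $n-m-1,\,n-m$ become $m-n-1,\,m-n$ after the interchange, this is exactly the asserted formula, which completes the argument.

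There is essentially no obstacle here: the only point requiring care is the bookkeeping of how Kuhn duality acts on Ext-groups, namely that it reverses the order of the two arguments while preserving the cohomological degree. This is the standard behaviour of a contravariant exact duality and has already been used repeatedly in the paper (for the dual short exact sequence \eqref{richardson_seq_dual} in Corollary \ref{decomp_matrix} and for Corollary \ref{ext_weyl_simple}), so no input beyond Proposition \ref{ext_schur} is needed.
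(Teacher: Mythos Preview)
Your proposal is correct and follows exactly the paper's own proof, which simply states that the assertion follows from Proposition \ref{ext_schur} by the Kuhn duality. Your explicit identification $\operatorname{Ext}^q(W_m,W_n)\simeq\operatorname{Ext}^q(S_n,S_m)$ and the subsequent index swap are precisely what that one-line proof unpacks to.
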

\begin{proof}
The assertion of the corollary follows from Proposition \ref{ext_schur} by the Kuhn duality.
\end{proof}
As a corollary to the last proposition, we claim that the long exact sequence (\ref{koszul_res}) is that induced by the Koszul kernel complex, possibly truncated. Indeed, we saw in the previous section that $S_j=K^j$ for $0\leq j \leq p-1.$ Letting $q=0$ in Proposition \ref{ext_schur}, we deduce that the long exact sequence (\ref{koszul_res}) for $0\leq i \leq p-1$ has the following form: $$ 0 \to F_i \to S_i \xrightarrow{d_i} S_{i+1} \to \ldots \to S_{p-2} \xrightarrow{d_{p-2}} S_{p-1} \to 0,$$ i.e. this is indeed the long exact sequence induced by the Koszul kernel complex. We also obtain \begin{equation}
\label{kernel_koszul_complex}
F_i=\ker(S_i\xrightarrow{d_i} S_{i+1})
\end{equation}
for $0\leq i \leq p-1.$
\begin{prop}
\label{ext_schur_simple}
If $0\leq m,n \leq p-1$ then $$\operatorname{Ext}^q(S_m,F_n)=\begin{cases} k & \text{if} \ (m<n \land q=n-m-1) \lor q=2p-m-n-2 \\ 0 & \text{otherwise} \end{cases}.$$
\end{prop}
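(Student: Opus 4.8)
The plan is to read the answer directly off the minimal projective resolution of $S_m$ recorded in (\ref{minimal_projres_schur}), exploiting its minimality. Recall that if $P^\bullet \to S_m$ is a minimal projective resolution, then every differential of the complex $\operatorname{Hom}(P^\bullet, F_n)$ vanishes: the image of each differential of $P^\bullet$ lies in the radical of the next term, so after applying the contravariant functor $\operatorname{Hom}(-, F_n)$, which factors through the semisimple top, all induced maps are zero. Consequently $\operatorname{Ext}^q(S_m, F_n) \simeq \operatorname{Hom}(P^q, F_n)$, and the whole computation reduces to identifying, term by term, which indecomposable projective appears in cohomological degree $q$ and comparing it with the projective cover $P_n = (\Omega^n)^{\#}$ of $F_n$.

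First I would fix the indexing of the terms of (\ref{minimal_projres_schur}). Each term is a single indecomposable projective: by the proof of Proposition \ref{injective_hull} we have $\Omega^j \simeq (\Omega^j)^{\#} = P_j$ for $1 \leq j$, and $(\Omega^0)^{\#} = \Gamma^p = P_0$, where $P_j$ denotes the projective cover of $F_j$. Hence $\operatorname{Hom}(P^q, F_n) = \operatorname{Hom}(\operatorname{top} P^q, F_n)$ is $k$ precisely when the index of the projective in degree $q$ equals $n$, and $0$ otherwise. I would then split the resolution into its two visible segments: the truncated Koszul segment $\Omega^{m+1}, \Omega^{m+2}, \ldots, \Omega^{p-1}$ occupying cohomological degrees $0 \leq q \leq p-m-2$, with $P^q = \Omega^{m+1+q}$; and the Kuhn-dual Koszul segment $(\Omega^{p-1})^{\#}, \ldots, (\Omega^0)^{\#}$ occupying degrees $p-m-1 \leq q \leq 2p-m-2$, with $P^q = (\Omega^{2p-m-2-q})^{\#}$.

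The remaining step is pure bookkeeping: solve, in each segment, for the degree at which the index equals $n$. In the first segment $m+1+q = n$ forces $q = n-m-1$, which lies in the range $0 \leq q \leq p-m-2$ exactly when $m < n$; in the second segment $2p-m-2-q = n$ forces $q = 2p-m-n-2$, which always lies in the allowed range. These are the only two contributions, and they never coincide, since $n-m-1 = 2p-m-n-2$ would force $2n = 2p-1$, which is impossible. This yields exactly the stated formula. The one thing to be careful about, and the only real obstacle, is the correct indexing of the two segments and checking that the candidate degrees fall into their respective ranges, including the degenerate endpoint $m = p-1$, where the first segment is empty and one should cross-check against $\operatorname{Ext}^q(F_{p-1}, F_n)$ from Corollary \ref{ext_simples}; the homological content, namely the vanishing of all differentials, is immediate from minimality.

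Finally, I would note that one can instead argue by downward induction on $n$ using the long exact sequence obtained by applying $\operatorname{Ext}^\bullet(S_m, -)$ to the short exact sequence (\ref{richardson_seq_dual}), with base case $S_{p-1} = F_{p-1}$ and input $\operatorname{Ext}^\bullet(S_m, S_n)$ from Proposition \ref{ext_schur}. There the genuine difficulty is showing that the induced map $\operatorname{Ext}^{n-m}(S_m, S_n) \to \operatorname{Ext}^{n-m}(S_m, F_{n+1})$ is an isomorphism, rather than forcing extra cohomology to survive; the projective-resolution argument sidesteps this identification of connecting maps entirely, which is why I would prefer it.
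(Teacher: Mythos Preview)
Your proposal is correct and follows essentially the same route as the paper: use the minimal projective resolution (\ref{minimal_projres_schur}) of $S_m$, invoke minimality to kill all differentials of $\operatorname{Hom}(P^\bullet,F_n)$, and read off which projective sits in each degree. The paper phrases the term-by-term identification via Lemma~\ref{hom_schur_omega} rather than via tops of projective covers, and omits the explicit bookkeeping of the two segments that you carry out, but the argument is the same.
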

\begin{proof}
We use the minimal projective resolution of $S_m$ (\ref{minimal_projres_schur}), hence $\operatorname{Ext}^q(S_m,F_n)=H^q(\operatorname{Hom}(P_\bullet,F_n))$, where $\operatorname{Hom}(P_\bullet,F_n)$ is the complex obtained by applying the functor $\operatorname{Hom}(-,F_n)$ to the resolution of $S_m$ (\ref{minimal_projres_schur}) with the removed first term. Then the assertion of the proposition follows from Lemma \ref{hom_schur_omega} and the fact that the differentials of the complex $\operatorname{Hom}(P_\bullet,F_n)$ are the zero maps since the resolution (\ref{minimal_projres_schur}) is minimal.
\end{proof}
\begin{cor}
If $0\leq m,n \leq p-1$ then $$\operatorname{Ext}^q(F_m,W_n)=\begin{cases} k & \text{if} \ (m>n \land q=m-n-1) \lor q=2p-m-n-2 \\ 0 & \text{otherwise} \end{cases}.$$
\end{cor}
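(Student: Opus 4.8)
The plan is to derive this corollary from Proposition \ref{ext_schur_simple} by the Kuhn duality, in exactly the same manner that Corollaries \ref{ext_weyl_simple} and \ref{ext_weyls} were obtained from their respective statements. The only input needed is that the contravariant exact self-equivalence $(-)^\#$ induces natural isomorphisms $\operatorname{Ext}^q(A,B)\cong\operatorname{Ext}^q(B^\#,A^\#)$ for all $A,B\in\mathcal{P}_p$ and all $q$, together with the identification of the relevant duals.

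First I would record the duals of the functors involved. Since $W_\lambda=(S_\lambda)^\#$ and $(-)^\#$ is an involution, we have $(S_m)^\#=W_m$ and $(W_n)^\#=S_n$. Moreover, the simple functors are self-dual under the Kuhn duality: applying $(-)^\#$ to the defining factorization $W_\lambda\hookrightarrow\Lambda^\lambda\to S_\lambda$ (and using the self-duality of $\Lambda^\lambda$) identifies $F_\lambda^\#=\operatorname{im}(W_\lambda\to S_\lambda)$ with $F_\lambda$, so in particular $(F_n)^\#=F_n$.

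With these identifications in hand, I would apply the duality directly to Proposition \ref{ext_schur_simple}:
$$\operatorname{Ext}^q(S_m,F_n)\cong\operatorname{Ext}^q\big((F_n)^\#,(S_m)^\#\big)=\operatorname{Ext}^q(F_n,W_m).$$
Reading off the proposition, this gives $\operatorname{Ext}^q(F_n,W_m)=k$ precisely when $(m<n\land q=n-m-1)\lor q=2p-m-n-2$, and $0$ otherwise. Finally I would interchange the roles of $m$ and $n$: the condition $q=2p-m-n-2$ is symmetric, while $m<n$ becomes $n<m$, i.e. $m>n$, yielding exactly the stated formula for $\operatorname{Ext}^q(F_m,W_n)$.

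There is no genuine obstacle here; the argument is a routine application of the Kuhn duality combined with the self-duality of the simples, and the only point requiring a little care is the bookkeeping of indices when passing from $\operatorname{Ext}^q(F_n,W_m)$ to $\operatorname{Ext}^q(F_m,W_n)$.
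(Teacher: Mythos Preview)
Your proposal is correct and matches the paper's own proof, which simply invokes the Kuhn duality applied to Proposition \ref{ext_schur_simple}. Your write-up is more explicit about the self-duality of $F_n$ and the index swap, but the argument is identical in substance.
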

\begin{proof}
The assertion of the corollary follows from Proposition \ref{ext_schur_simple} by the Kuhn duality.
\end{proof}
\begin{prop}
If $0\leq m,n \leq p-1$ then
$$\operatorname{Ext}^q(S_m,W_n)=\begin{cases} k & \text{if} \ (m=n \land q=0)\lor q=2p-m-n-3 \lor q=2p-m-n-2  \\ 0 & \text{otherwise} \end{cases}.$$
\end{prop}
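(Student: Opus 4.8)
The plan is to compute $\operatorname{Ext}^q(S_m,W_n)$ by feeding the short exact sequence (\ref{richardson_seq}), namely $0\to F_{n+1}\to W_n\to F_n\to 0$, into the functor $\operatorname{Hom}(S_m,-)$ and exploiting the groups $\operatorname{Ext}^*(S_m,F_n)$ and $\operatorname{Ext}^*(S_m,F_{n+1})$ already obtained in Proposition \ref{ext_schur_simple}. This yields the long exact sequence
\begin{equation*}
\cdots\to\operatorname{Ext}^q(S_m,F_{n+1})\to\operatorname{Ext}^q(S_m,W_n)\to\operatorname{Ext}^q(S_m,F_n)\xrightarrow{\delta}\operatorname{Ext}^{q+1}(S_m,F_{n+1})\to\cdots,
\end{equation*}
so the whole computation reduces to pinning down the connecting maps $\delta$. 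By Proposition \ref{ext_schur_simple} the group $\operatorname{Ext}^q(S_m,F_n)$ is nonzero only in degrees $q=n-m-1$ (when $m<n$) and $q=2p-m-n-2$, while $\operatorname{Ext}^q(S_m,F_{n+1})$ is nonzero only in degrees $q=n-m$ (when $m\le n$) and $q=2p-m-n-3$. A direct comparison of these four integers shows that the source of $\delta$ and its (once shifted) target can overlap in exactly one place, namely the map $\delta\colon\operatorname{Ext}^{n-m-1}(S_m,F_n)\to\operatorname{Ext}^{n-m}(S_m,F_{n+1})$ in the case $m<n$; every other connecting map vanishes for degree reasons.

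Consequently, for $m\ge n$ all connecting maps vanish, the long exact sequence degenerates into short exact pieces, and $\operatorname{Ext}^q(S_m,W_n)$ is the sum of the two boundary contributions; reading off the degrees gives the asserted answer, the extra degree-$0$ class occurring precisely when $m=n$ (coming from $\operatorname{Ext}^0(S_m,F_{m+1})=k$). The boundary case $n=p-1$ needs no long exact sequence at all: there $W_{p-1}=F_{p-1}=\Lambda^p$, so $\operatorname{Ext}^q(S_m,W_{p-1})=\operatorname{Ext}^q(S_m,F_{p-1})$ and the result is immediate from Proposition \ref{ext_schur_simple}. Thus the only substantial case is $m<n\le p-2$, where it remains to show that the single surviving map $\delta$ is an isomorphism of one-dimensional spaces; granting this, the classes in degrees $n-m-1$ and $n-m$ cancel, leaving exactly the two high-degree classes in degrees $2p-m-n-3$ and $2p-m-n-2$, as claimed.

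The main obstacle is therefore the nonvanishing of $\delta\colon\operatorname{Ext}^{n-m-1}(S_m,F_n)\to\operatorname{Ext}^{n-m}(S_m,F_{n+1})$ for $m<n\le p-2$. I would establish it by computing $\delta$ explicitly on the minimal projective resolution (\ref{minimal_projres_schur}) of $S_m$. For $m<n\le p-2$ the relevant terms lie in the ``$\Omega$-part'' of that resolution: the $(n-m-1)$-st and $(n-m)$-th projectives are $\Omega^n$ and $\Omega^{n+1}$, joined by the Koszul differential $\kappa_{n+1}\colon\Omega^{n+1}\to\Omega^n$. Since the resolution is minimal and $\Omega^n\simeq(\Omega^n)^{\#}$ is the projective cover of $F_n$ for $n\ge1$ (Proposition \ref{injective_hull}), the generator of $\operatorname{Ext}^{n-m-1}(S_m,F_n)$ is represented by the canonical projection $\pi\colon\Omega^n\to F_n$ onto the top. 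Lifting $\pi$ through the epimorphism $W_n\to F_n$ of (\ref{richardson_seq}) to a map $\tilde\pi\colon\Omega^n\to W_n$, the connecting class $\delta$ is represented by $\tilde\pi\circ\kappa_{n+1}\colon\Omega^{n+1}\to W_n$, whose image equals $\tilde\pi(\operatorname{im}\kappa_{n+1})=\tilde\pi(S_n)$, using $\operatorname{im}\kappa_{n+1}=\ker\kappa_n=S_n$ from the acyclicity of the Koszul complex.

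It then suffices to prove $\tilde\pi(S_n)\neq 0$. Here $\tilde\pi$ is surjective, since its composite with $W_n\to F_n$ is the surjection $\pi$ onto the simple top of $W_n$, whence $\operatorname{im}\tilde\pi=W_n$ by Nakayama. If $\tilde\pi(S_n)$ vanished, then $\tilde\pi$ would factor through $\Omega^n/S_n\simeq S_{n-1}$ (by the Koszul short exact sequence (\ref{koszul_sh_seq})), producing a nonzero epimorphism $S_{n-1}\to W_n$. But $\operatorname{Hom}(S_{n-1},W_n)=0$: the image of any nonzero map would be simultaneously a quotient of $S_{n-1}$ (i.e.\ $S_{n-1}$ or its top $F_n$) and a subobject of $W_n$ (i.e.\ $W_n$ or its socle $F_{n+1}$), and these lists share no nonzero term, as $S_{n-1}$ and $W_n$ have distinct socles and $F_n\neq F_{n+1}$. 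This contradiction forces $\tilde\pi(S_n)\neq 0$, so $\delta$ is an isomorphism and the hard case is settled. I expect this nonvanishing of the connecting map -- equivalently, the nontriviality of the Yoneda product with the extension class of $W_n$ -- to be the only genuinely delicate point, the remaining cases being bookkeeping with Proposition \ref{ext_schur_simple}.
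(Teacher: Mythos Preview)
Your proof is correct and takes a genuinely different route from the paper's. The paper computes $\operatorname{Ext}^q(S_m,W_n)$ directly as the cohomology of $\operatorname{Hom}(P_\bullet,W_n)$, where $P_\bullet$ is the minimal projective resolution (\ref{minimal_projres_schur}); it must therefore identify \emph{all} the nonzero terms (via Lemma~\ref{hom_schur_omega}) and then decide, case by case, which of the three candidate differentials are zero and which are not, with separate arguments for $n=p-1$ and $n=p-2$. You instead feed the short exact sequence $0\to F_{n+1}\to W_n\to F_n\to 0$ into $\operatorname{Hom}(S_m,-)$ and leverage the already established Proposition~\ref{ext_schur_simple}, so that degree reasons kill every connecting map except one, and the entire computation reduces to a single nonvanishing statement.

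It is worth noting that this single hard step is essentially the same in both proofs: your lift $\tilde\pi$ is (up to scalar) the projection $s_n\colon\Omega^n\twoheadrightarrow W_n$ used in the paper, since $\operatorname{Hom}(\Omega^n,W_n)=k$, and both arguments deduce $s_n\kappa_{n+1}\neq 0$ from a composition-series comparison (the paper phrases it as $S_n\not\subset\ker s_n=W_{n-1}$, you as $\operatorname{Hom}(\Omega^n/S_n,W_n)=\operatorname{Hom}(S_{n-1},W_n)=0$). What your approach buys is economy: the long exact sequence packages most of the case analysis automatically, so you need only one differential computation rather than three. What the paper's approach buys is self-containment: it does not rely on Proposition~\ref{ext_schur_simple}, so the two propositions could be proved independently.
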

\begin{proof}
We use the minimal projective resolution of $S_m$ (\ref{minimal_projres_schur}), hence $\operatorname{Ext}^q(S_m,W_n)=H^q(\operatorname{Hom}(P_\bullet,W_n))$, where $\operatorname{Hom}(P_\bullet,W_n)$ is the complex obtained by applying the functor $\operatorname{Hom}(-,W_n)$ to the resolution of $S_m$ (\ref{minimal_projres_schur}) with the removed first term.
It was showed in the proof of Proposition \ref{ext_schur} that the differential $\operatorname{Hom}(S_n,\Omega^{n+1})\to \operatorname{Hom}(S_n,\Omega^n)$ in the complex $\operatorname{Hom}(S_m,\Omega^\bullet)$ considered there is the zero map, hence the differential $\operatorname{Hom}((\Omega^{n+1})^{\#},W_n)\xrightarrow{-\circ \kappa^{\#}_{n+1}} \operatorname{Hom}((\Omega^n)^{\#},W_n)$ is the zero map by the Kuhn duality. Now we claim that in case $m<n<p-1$ the differential $\operatorname{Hom}(\Omega^n,W_n)=k\xrightarrow{-\circ \kappa_{n+1}} \operatorname{Hom}(\Omega^{n+1},W_n)=k$ is an isomorphism. Indeed, the projection $s_n:\Omega^n \twoheadrightarrow W_n$ generates $\operatorname{Hom}(\Omega^n,W_n)$. The above differential sends an element $c\cdot s_n$ with $c\in k$ to the element $c\cdot s_n \kappa_{n+1}$. Let us suppose that $s_n\kappa_{n+1}=0$, i.e. $\operatorname{im}\kappa_{n+1} \subset \ker s_n$. We see that $\operatorname{im}\kappa_{n+1}=S_n$ by the acyclicity of the Koszul complex and $\ker s_n=W_{n-1}$ by the Kuhn dual of (\ref{koszul_sh_seq}), hence $S_n\subset W_{n-1}$. On the other hand, the composition series of $S_n$ is not contained in the composition series of $W_{n-1}$, contrary to the last inclusion. In consequence, $s_n\kappa_{n+1}\neq 0$, hence the differential under consideration is non-zero, and the claim follows. In case $n=p-1$ we have $W_{p-1}=F_{p-1}$, hence the differential $\operatorname{Hom}(\Omega^{p-1},W_{p-1})\to \operatorname{Hom}((\Omega^{p-1})^{\#},W_{p-1})$ is the zero map since this is a differential in the complex $\operatorname{Hom}(P_\bullet,F_{p-1})$ considered in Proposition \ref{ext_schur_simple}. We also observe that in case $n=p-2$ the differential $\operatorname{Hom}(\Omega^{p-1},W_{p-2})=k\xrightarrow{- \circ \kappa_p\kappa^{\#}_p} \operatorname{Hom}((\Omega^{p-1})^{\#},W_{p-2})=k$ is the zero map. Indeed, the map $s_{p-2}\kappa_{p-1}$ generates $\operatorname{Hom}(\Omega^{p-1},W_{p-2})$. The considered differential sends an element $c\cdot s_{p-2}\kappa_{p-1}$ with $c\in k$ to the element $c\cdot  s_{p-2}\kappa_{p-1}\kappa_p\kappa^{\#}_p=0$, i.e. this differential is the zero map, as required. Finally, the assertion of the proposition follows from Lemma \ref{hom_schur_omega} and all the above observations.
\end{proof}
\section{The Yoneda algebras of the Schur functors and of simple functors}
\subsection{The Yoneda algebra of the Schur functors}
The main aim of this subsection is to describe the structure of the Yoneda algebra of Schur functors and to prove that the endomorphism algebra whose cohomology algebra is the Yoneda algebra of Schur functors is a formal DG algebra.

Let $\mathcal{T}_i$ denote the complex obtained by removing $S_i$ in the injective resolution of $S_i$ (\ref{inj_res_schur}). We will now determine a basis of the Yoneda algebra of Schur functors whose elements are induced by the morphisms of complexes defined in the following way. For given $0\leq i\leq j \leq p-1$ we define the morphism of complexes $\gamma_{ji}\in \operatorname{Hom}^{j-i}(\mathcal{T}_i,\mathcal{T}_j)$ as the natural inclusion. For $0 \leq i \leq p-1$ let $\widetilde{d}_i:\mathcal{T}_i\to \mathcal{T}_{i+1}$ be the map whose components are given by $\widetilde{d}_i^m=(-1)^{i-m}d_{i-m}$ for $0\leq m \leq i.$ By (\ref{contractible_omegas}) we see that $\widetilde{d}_i$ is indeed a morphism of complexes. Then for given $0\leq i < j \leq p-1$ let $\overline{\gamma}_{ji} \in \operatorname{Hom}^{j-i-1}(\mathcal{T}_i,\mathcal{T}_j)$ be the morphism of complexes given by $\overline{\gamma}_{ji}=\gamma_{j,i+1} \widetilde{d}_i.$ We now show that $\gamma_{ji}$ and $\overline{\gamma}_{ji}$ are not null-homotopic.
\begin{lem}
\label{notnull_schur}
The morphism of complexes $\gamma_{ji}$ (resp. $\overline{\gamma}_{ji}$) is not null-homotopic for any $0\leq i\leq j\leq p-1 $ (resp. $0\leq i < j \leq p-1$).
\end{lem}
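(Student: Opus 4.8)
The plan is to reinterpret ``not null-homotopic'' as ``represents a non-zero class under the canonical isomorphism''
\[
\{\text{degree-}d\text{ chain maps }\mathcal{T}_i\to\mathcal{T}_j\}/(\text{homotopy})\;\cong\;\operatorname{Ext}^d(S_i,S_j).
\]
This isomorphism holds because $\mathcal{T}_i$ and $\mathcal{T}_j$ are bounded-below complexes of injectives quasi-isomorphic, via the augmentations $\imath_i\colon S_i\hookrightarrow\Omega^i=\mathcal{T}_i^0$ and $\imath_j$, to $S_i$ and $S_j$; indeed $\mathcal{T}_i$ is just the minimal injective resolution (\ref{inj_res_schur}) with $S_i$ deleted. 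Under this identification the class of a chain map $f$ is computed by restriction along the augmentation: one forms the cocycle $f^0\circ\imath_i\in\operatorname{Hom}(S_i,\mathcal{T}_j^{d})$ in the complex $\operatorname{Hom}(S_i,\mathcal{T}_j^\bullet)$ computing $\operatorname{Ext}^*(S_i,S_j)$ (this is the usual comparison of the two models of derived $\operatorname{Hom}$). Thus it suffices to show that the two restricted cocycles are non-zero in cohomology.

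First I would write these cocycles down explicitly. Every component of the inclusion $\gamma_{ji}$ is an identity $\Omega^{i-m}\to\Omega^{i-m}$, so its degree-$0$ component is $\mathrm{id}_{\Omega^i}\colon\mathcal{T}_i^0=\Omega^i\to\mathcal{T}_j^{j-i}=\Omega^i$, and $\gamma_{ji}$ restricts to $\imath_i\in\operatorname{Hom}(S_i,\Omega^i)=\operatorname{Hom}(S_i,\mathcal{T}_j^{j-i})$. For $\overline{\gamma}_{ji}=\gamma_{j,i+1}\widetilde{d}_i$ the degree-$0$ component is $(-1)^i d_i\colon\Omega^i\to\Omega^{i+1}=\mathcal{T}_j^{\,j-i-1}$, so $\overline{\gamma}_{ji}$ restricts to $(-1)^i d_i\imath_i\in\operatorname{Hom}(S_i,\Omega^{i+1})=\operatorname{Hom}(S_i,\mathcal{T}_j^{\,j-i-1})$. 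Both are genuine cocycles, since $\kappa_i\imath_i=0$ and $\kappa_{i+1}d_i\imath_i=-d_{i-1}\kappa_i\imath_i=0$ by (\ref{contractible_omegas}), exactly as in the proof of Proposition \ref{ext_schur}.

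It then remains to see that neither cocycle is a coboundary, and here I would exploit Lemma \ref{hom_schur_omega}. Its term in degree $m$ being $\operatorname{Hom}(S_i,\Omega^{j-m})$, the complex $\operatorname{Hom}(S_i,\mathcal{T}_j^\bullet)$ is concentrated (for $i<j$) in the two adjacent degrees $j-i-1$ and $j-i$, each isomorphic to $k$. By Proposition \ref{ext_schur} both cohomology groups $\operatorname{Ext}^{j-i-1}(S_i,S_j)$ and $\operatorname{Ext}^{j-i}(S_i,S_j)$ are one-dimensional; a single differential $k\to k$ cannot have both kernel and cokernel one-dimensional unless it is zero, so that differential vanishes. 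Hence every non-zero cochain in these degrees is a non-zero class. Since $\imath_i\neq0$ and $d_i\imath_i\neq0$ (the latter generates $\operatorname{Hom}(S_i,\Omega^{i+1})=k$, being non-zero because $F_i=\ker(d_i\colon S_i\to S_{i+1})$ is a proper subfunctor of $S_i$ for $i\le p-2$), both $\gamma_{ji}$ and $\overline{\gamma}_{ji}$ represent non-zero $\operatorname{Ext}$-classes, so they are not null-homotopic. The only degenerate case is $i=j$, where $\gamma_{ii}=\mathrm{id}_{\mathcal{T}_i}$ represents $1\in\operatorname{Ext}^0(S_i,S_i)$.

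The main obstacle is conceptual rather than computational: one must correctly transfer the homotopy-theoretic claim to the $\operatorname{Ext}$-picture and verify that restriction along the augmentation genuinely computes the class, in particular that the relevant cocycles are not coboundaries. Once the cochain complex is recognized as concentrated in two one-dimensional degrees with vanishing differential — equivalently, once (\ref{contractible_omegas}) is used to annihilate the only potential coboundary — the non-vanishing is forced and no further calculation is needed.
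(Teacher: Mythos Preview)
Your proof is correct but proceeds along a different route than the paper's. The paper argues directly at the level of chain homotopies: assuming a homotopy $h$ exists, it inspects the last component of the diagram, uses Lemma~\ref{hom_schur_omega} to pin down $h^i_{ji}$ (respectively $\overline{h}^i_{ji}$) up to scalar, and then invokes the relation $d\kappa+\kappa d=0$ from (\ref{contractible_omegas}) to obtain a contradiction. Your argument instead passes through the identification $\operatorname{Hom}_{K}(\mathcal{T}_i,\mathcal{T}_j[d])\cong\operatorname{Ext}^d(S_i,S_j)$ via restriction along the augmentation $\imath_i$, and then shows the resulting cocycles $\imath_i$ and $(-1)^id_i\imath_i$ are nonzero in $H^*(\operatorname{Hom}(S_i,\mathcal{T}_j^\bullet))$ by observing, via Lemma~\ref{hom_schur_omega} and Proposition~\ref{ext_schur}, that this two-term complex $k\to k$ must have zero differential. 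This is legitimate and not circular, since Proposition~\ref{ext_schur} is established in Section~2 without reference to the present lemma; in fact its proof already contains the explicit verification that $\kappa_{i+1}\circ-$ vanishes on $\operatorname{Hom}(S_i,\Omega^{i+1})$, which you recover here by dimension counting. What your approach buys is economy: once Proposition~\ref{ext_schur} is in hand, non-nullhomotopy reduces to the obvious nonvanishing of $\imath_i$ and of $d_i\imath_i$ (the latter since $F_i\subsetneq S_i$ for $i\le p-2$). What the paper's approach buys is self-containment: it does not appeal to the $\operatorname{Ext}$ computation and instead exhibits the obstruction to a homotopy by hand, which makes the role of (\ref{contractible_omegas}) more visible.
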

\begin{proof}
We prove first the assertion of the lemma for $\gamma_{ji}.$ Fix $0\leq i \leq j \leq p-1.$ We suppose that $\gamma_{ji}$ is null-homotopic. Let $h_{ji}=h_{ji}^*$ be a homotopy between $\gamma_{ji}$ and the zero map. We consider the following diagram, where the top row is the complex $\mathcal{T}_i$ and the bottom row is the complex $\mathcal{T}_j[j-i].$
$$\begin{tikzcd}[column sep=3em,row sep=3em]
 & \ldots \arrow[r] & \Omega^0 \arrow[r] \arrow[ld,"h^i_{ji}"] \arrow[d,"\gamma^{i}_{ji}"] &  0 \arrow[r]\arrow[ld,"h^{i+1}_{ji}"]  & \ldots \\
\ldots \arrow[r] & \Omega^1 \arrow[r,"\kappa_1"] & \Omega^0  \arrow[r] &  \ldots & 
\end{tikzcd}$$
Clearly $h^{i+1}_{ji}=0$ and, by definition, $\gamma^i_{ji}=id.$ Since $h_{ji}$ is a homotopy between $\gamma_{ji}$ and the zero map, we obtain $\kappa_1h^i_{ji}=id$. By Lemma \ref{hom_schur_omega} we have $\operatorname{Hom}(\Omega^0,\Omega^1)=\operatorname{Hom}(S_0,\Omega^1)=k.$ It implies that $h^i_{ji}=a\cdot d_0$ for some $a\in k,$ hence
\begin{equation}
\label{homotopy_schurs1}
\kappa_1\circ (a \cdot d_0)=id.
\end{equation}
On the other hand, $\kappa_1\circ (a \cdot d_0)=a\cdot \kappa_1 d_0=0$ by (\ref{contractible_omegas}), contrary to (\ref{homotopy_schurs1}). Therefore $\gamma_{ji}$ is not null-homotopic.

Now we turn to the case of the map  $\overline{\gamma}_{ji}.$ Fix $0\leq i < j \leq p-1.$ We suppose that $\overline{\gamma}_{ji}$ is null-homotopic. Let $\overline{h}_{ji}=\overline{h}^{*}_{ji}$ be a homotopy between $\overline{\gamma}_{ji}$ and the zero map. Then there is the following diagram, where the top row is the complex $\mathcal{T}_i$ and the bottom row is the complex $\mathcal{T}_j[j-i-1].$
$$\begin{tikzcd}[column sep=3em,row sep=3em]
 & \ldots \arrow[r] & \Omega^0 \arrow[r]\arrow[ld,"\overline{h}^i_{ji}"] \arrow[d,"\overline{\gamma}^{i}_{ji}"] &  0 \arrow[r]\arrow[ld,"\overline{h}^{i+1}_{ji}"] & \ldots \\
\ldots \arrow[r] & \Omega^2 \arrow[r,"\kappa_2"] & \Omega^1  \arrow[r] &  \ldots &
\end{tikzcd}$$
Clearly $\overline{h}^{i+1}_{ji}=0$ and, by definition, $\overline{\gamma}^i_{ji}=d_0.$ Since $\overline{h}_{ji}$ is a homotopy between $\overline{\gamma}_{ji}$ and the zero map, the following equality holds:
\begin{equation}
\label{homotopy_schurs2}
\kappa_2\overline{h}^i_{ji}=d_0.
\end{equation}
By Lemma \ref{hom_schur_omega} we have $\operatorname{Hom}(\Omega^0,\Omega^2)=\operatorname{Hom}(S_0,\Omega^2)=0.$ It implies that $\overline{h}^i_{ji}=0,$ hence $\kappa_2\overline{h}^i_{ji}=0,$ contradicting (\ref{homotopy_schurs2}). Thus, $\overline{\gamma}_{ji}$ is not null-homotopic.
\end{proof}
The class of the map $\gamma_{ji}$ (resp. $\overline{\gamma}_{ji}$) in $\operatorname{Ext}^{j-i}(S_i,S_j)$ (resp. $\operatorname{Ext}^{j-i-1}(S_i,S_j)$) will be denoted by $[\gamma_{ji}]$ (resp. $[\overline{\gamma}_{ji}]$). Set $S=\bigoplus_{0\leq i \leq p-1} S_i.$ By Proposition \ref{ext_schur} and Lemma \ref{notnull_schur} we obtain
\begin{equation}
\label{vectorspacestruc_extschur2}
\operatorname{Ext}^*(S,S)=\operatorname{span}(\{[\gamma_{ji}]: 0\leq i \leq j \leq p-1\}\cup \{[\overline{\gamma}_{ji}]:0\leq i < j \leq p-1\})
\end{equation}
as the graded vector space. By the definition of the maps $\gamma_{ji}$ we have
\begin{equation}
\label{productschur_1}
\gamma_{ml}\cdot \gamma_{ji}=\begin{cases} \gamma_{mi} & \text{if } j=l \\ 0 & \text{otherwise} \end{cases}.
\end{equation}
We also find that
\begin{equation}
\label{productschur_2}
\gamma_{ml}\cdot \overline{\gamma}_{ji}=\begin{cases} \overline{\gamma}_{mi} & \text{if } j=l \\ 0 & \text{otherwise}  \end{cases}.
\end{equation}
Indeed, if $j=l$ then $$\gamma_{mj}\cdot \overline{\gamma}_{ji}=\gamma_{mj}\gamma_{j,i+1}\widetilde{d}_i=\gamma_{m,i+1}\widetilde{d}_i=\overline{\gamma}_{mi}.$$ We note that $ \gamma_{j,i+1} \widetilde{d}_i=\widetilde{d}_{j-1} \gamma_{j-1,i},$ hence, similarly to the above, we obtain 
\begin{equation}
\label{productschur_3}
\overline{\gamma}_{ml}\cdot \gamma_{ji}=\begin{cases} \overline{\gamma}_{mi} & \text{if } j=l \\ 0 & \text{otherwise}  \end{cases}.
\end{equation}
We also observe that
\begin{equation}
\label{productschur_4}
\overline{\gamma}_{ml}\cdot \overline{\gamma}_{ji}=0.
\end{equation}
Indeed, in case $j=l$ we have $$\overline{\gamma}_{ml}\cdot \overline{\gamma_{ji}}=\gamma_{m,j+1}\widetilde{d}_j\gamma_{j,i+1}\widetilde{d}_i=\gamma_{m,j+1}\gamma_{j+1,i+2}\widetilde{d}_{i+1}\widetilde{d}_i=0.$$ The equalities (\ref{productschur_1}) -- (\ref{productschur_4}) imply the equalities in the algebra $\operatorname{Ext}^*(S,S)$:
\begin{align*}
\nonumber & [\gamma_{ml}]\cdot [\gamma_{ji}]=\begin{cases} [\gamma_{mi}] & \text{if } j=l \\ 0 & \text{otherwise} \end{cases}, \qquad [\gamma_{ml}]\cdot [\overline{\gamma}_{ji}]=\begin{cases} [\overline{\gamma}_{mi}] & \text{if } j=l \\ 0 & \text{otherwise} \end{cases},\\ &
 [\overline{\gamma}_{ml}]\cdot [\gamma_{ji}]=\begin{cases} [\overline{\gamma}_{mi}] & \text{if } j=l \\ 0 & \text{otherwise} \end{cases}, \qquad [\overline{\gamma}_{ml}]\cdot [\overline{\gamma}_{ji}]= 0  
\end{align*}
with the appropriate inequality between the indices for a given equality. These equalities describe the multiplication on the Yoneda algebra of Schur functors $\operatorname{Ext}^*(S,S)$.

Let $\mathcal{T}=\bigoplus_{0\leq i \leq p-1} \mathcal{T}_i.$ We now show the DG formality of the DG algebra $\operatorname{End}^*(\mathcal{T})$ with the components $\operatorname{End}^i(\mathcal{T})=\prod_{n-m=i}\operatorname{Hom}_{\mathcal{P}_p^\varnothing}(\mathcal{T}^m,\mathcal{T}^n)$ and the differential $\mathfrak{d}$ given by $\mathfrak{d}(f^j)=(\kappa f^j-(-1)^i f^{j+1} \kappa)$ for $(f^j)\in \operatorname{End}^i(\mathcal{T}).$  We also describe the structure of the Yoneda algebra of Schur functors.
\begin{theorem}[]~
\label{quasiiso_ext_schurs} 
\begin{enumerate}[label=(\roman*)]
\item The algebra $\operatorname{End}^*(\mathcal{T})$ is a formal DG algebra, i.e. there exists a quasi-isomorphism of DG algebras $\phi:\operatorname{Ext}^*(S,S)\to \operatorname{End}^*(\mathcal{T}),$ where $\operatorname{Ext}^*(S,S)$ is the DG algebra with zero differential.
\item The Yoneda algebra of Schur functors $\operatorname{Ext}^*(S,S)$ is isomorphic to the square-zero extension of the algebra $U_p(k)$ of upper triangular matrices over $k$ by the $U_p(k)$-bimodule $U^{+}_p(k)$ of strictly upper triangular matrices over $k.$ 
\end{enumerate}
\end{theorem}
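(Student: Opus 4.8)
The plan is to base both parts on the explicit cocycle representatives $\gamma_{ji},\overline{\gamma}_{ji}$ and on the strict product formulas (\ref{productschur_1})--(\ref{productschur_4}) already recorded above. I would dispose of (ii) first, since it is a direct reinterpretation of the multiplication table. Using the basis in (\ref{vectorspacestruc_extschur2}), define a linear map $\Psi$ from $\operatorname{Ext}^*(S,S)$ to $U_p(k)\ltimes U_p^+(k)$ sending $[\gamma_{ji}]$ to the matrix unit $E_{p-j,\,p-i}$ in the subalgebra copy $U_p(k)$ (which is upper triangular because $i\le j$) and sending $[\overline{\gamma}_{ji}]$ to $E_{p-j,\,p-i}$ in the bimodule copy $U_p^+(k)$ (strictly upper triangular because $i<j$). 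The four product rules for the classes then translate into the single matrix-unit relation $E_{ab}E_{cd}=\delta_{bc}E_{ad}$: the $[\gamma]\cdot[\gamma]$ rule becomes matrix multiplication inside $U_p(k)$, the two mixed rules become precisely the left and right $U_p(k)$-actions defining the bimodule $U_p^+(k)$, and $[\overline{\gamma}]\cdot[\overline{\gamma}]=0$ becomes the square-zero condition. Hence $\Psi$ is an algebra isomorphism, proving (ii).

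For (i), I would exhibit $\phi$ as a strict multiplicative section of the projection from cocycles onto cohomology. Recall that $H^*(\operatorname{End}^*(\mathcal{T}))=\operatorname{Ext}^*(S,S)$ because each $\mathcal{T}_i$ is an injective resolution of $S_i$. Define $\phi\colon \operatorname{Ext}^*(S,S)\to \operatorname{End}^*(\mathcal{T})$ on the basis by $\phi([\gamma_{ji}])=\gamma_{ji}$ and $\phi([\overline{\gamma}_{ji}])=\overline{\gamma}_{ji}$, where the right-hand sides are the actual morphisms of complexes, and extend linearly. Since the source carries the zero differential, checking that $\phi$ is a chain map amounts to verifying that each representative is $\mathfrak{d}$-closed; this holds because $\gamma_{ji}$ is a morphism of complexes and $\overline{\gamma}_{ji}=\gamma_{j,i+1}\widetilde{d}_i$ is a composite of morphisms of complexes, $\widetilde{d}_i$ being a chain map by (\ref{contractible_omegas}). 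Multiplicativity of $\phi$ on basis elements is then immediate from (\ref{productschur_1})--(\ref{productschur_4}): for instance $\phi([\gamma_{ml}])\,\phi([\gamma_{ji}])=\gamma_{ml}\gamma_{ji}=\gamma_{mi}=\phi([\gamma_{mi}])=\phi([\gamma_{ml}]\,[\gamma_{ji}])$ when $j=l$, and the remaining three cases are identical.

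It remains to see that $\phi$ is a quasi-isomorphism. On cohomology, $H^*(\phi)$ carries each class $[\gamma_{ji}]$ of the source to the class of $\gamma_{ji}$ in $\operatorname{End}^*(\mathcal{T})$, namely $[\gamma_{ji}]$ itself, and similarly for $[\overline{\gamma}_{ji}]$; by Lemma \ref{notnull_schur} and Proposition \ref{ext_schur} these classes constitute a basis of $\operatorname{Ext}^*(S,S)$, so $H^*(\phi)$ is the identity, hence an isomorphism. This yields (i). The point worth stressing --- and the only genuinely delicate step --- is that no higher homotopy corrections are needed: a priori a linear section of a quasi-isomorphism is merely an $A_\infty$-morphism with potentially nonzero higher components, and formality could in principle fail through nonvanishing Massey products. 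Here that obstruction is absent precisely because the span of the representatives $\gamma_{ji},\overline{\gamma}_{ji}$ is closed under composition and forms a genuine DG subalgebra of $\operatorname{End}^*(\mathcal{T})$ that maps isomorphically onto cohomology. All the work is therefore concentrated in the strict relations (\ref{productschur_1})--(\ref{productschur_4}), above all the on-the-nose vanishing $\overline{\gamma}_{ml}\overline{\gamma}_{ji}=0$, which rests on the sign conventions built into $\widetilde{d}_i$ together with the identity $d\kappa+\kappa d=0$ from (\ref{contractible_omegas}).
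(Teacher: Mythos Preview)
Your proposal is correct and follows essentially the same approach as the paper: lift the basis classes to the explicit cocycles $\gamma_{ji},\overline{\gamma}_{ji}$, use the strict product formulas (\ref{productschur_1})--(\ref{productschur_4}) to check multiplicativity, and identify the resulting multiplication table with that of the square-zero extension via matrix units. The only cosmetic differences are that the paper introduces an auxiliary abstract algebra $A$ with formal generators $a_{ji},\overline{a}_{ji}$ as an intermediary (you work directly with $\operatorname{Ext}^*(S,S)$), proves (i) before (ii), and uses the indexing $e_{i+1,j+1}$ rather than your $E_{p-j,p-i}$; your indexing is in fact cleaner, since it makes $\Psi$ a genuine homomorphism with the standard matrix-unit convention.
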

\begin{proof}
Let $A=\bigoplus_{t\in \mathbb{N}} A_t$ be the graded vector space with the components $$A_t=\operatorname{span}(\{a_{ji}:0\leq i \leq j \leq p-1 \ \land \ j-i=t\}\cup\{\overline{a}_{ji}:0\leq i < j \leq p-1 \land j-i-1=t\})$$ for $0\leq t \leq p-1$ and $A_t=0$ for $t\geq p,$ where $a_{ji},\overline{a}_{ji}$ are the formal symbols for $i,j$ satisfying the conditions given above. We define the multiplication on $A$ as follows:
\begin{align*}
& a_{ml}\cdot a_{ji}=\begin{cases} a_{mi} & \text{if } j=l \\ 0 & \text{otherwise} \end{cases}, \qquad a_{ml}\cdot \overline{a}_{ji}=\begin{cases} \overline{a}_{mi} & \text{if } j=l \\ 0 & \text{otherwise} \end{cases},\\ &
 \overline{a}_{ml}\cdot a_{ji}=\begin{cases} \overline{a}_{mi} & \text{if } j=l \\ 0 & \text{otherwise} \end{cases}, \qquad \overline{a}_{ml}\cdot \overline{a}_{ji}=0
\end{align*}
with the appropriate inequality between the indices for a given equality. The differential on $A$ is defined to be the zero differential.

Let us first prove (i). Let $\phi:A\to \operatorname{End}^*(\mathcal{T})$ be the map such that $\phi(a_{ji})=\gamma_{ji}$ for $0\leq i \leq j \leq p-1$ and $\phi(\overline{a}_{ji})=\overline{\gamma}_{ji}$ for $0\leq i < j \leq p-1.$ By (\ref{productschur_1})-(\ref{productschur_4}) we see that $\phi$ is a graded homomorphism and $\phi$ is a chain map, since $\phi(a_{ji})$ and $\phi(\overline{a}_{ji})$ are morphisms of complexes. In other words, $\phi$ is a DG homomorphism. It suffices to show that $\phi$ is a quasi-isomorphism. It follows from the definition of the differential on $A$ that $H^*(A)\simeq A.$ Then it is evident that the induced graded algebra homomorphism $\phi_*:H^*(A)\simeq A \to H^*(\operatorname{End}^*(\mathcal{T}))\simeq \operatorname{Ext}^*(S,S)$ is given by $\phi_*(a_{ji})=[\gamma_{ji}]$ and $\phi_*(\overline{a}_{ji})=[\overline{\gamma}_{ji}].$ By (\ref{vectorspacestruc_extschur2}) $\phi_*$ maps a basis of $A$ onto a basis of $\operatorname{Ext}^*(S,S).$ Hence $\phi_*$ is a graded algebra isomorphism. In particular, $\phi:A\simeq \operatorname{Ext}^*(S,S) \to \operatorname{End}^*(\mathcal{T})$ is a quasi-isomorphism, as required.

It remains to prove (ii). Let $\{e_{ij}:1\leq i \leq j \leq p\}$ (resp. $\{\overline{e}_{ij}:1\leq i < j \leq p\}$) be the standard basis of $U_p(k)$ (resp. $U^{+}_p(k)$). We will denote by $U_p(k)\oplus U_p^{+}(k)$ the square-zero extension of the algebra $U_p(k)$ by the $U_p(k)$-bimodule $U_p^{+}(k).$ We recall that the multiplication on $U_p(k)\oplus U_p^{+}(k)$ is given by $(u_1,\overline{u}_1)\cdot (u_2,\overline{u}_2)=(u_1u_2,u_1\overline{u}_2+\overline{u}_1u_2).$ Then it is immediate that
\begin{align}
\label{equalities_triangmatrix}
\nonumber & (e_{ij},0)\cdot (e_{lm},0)=\begin{cases} (e_{im},0) & \text{if } j=l, \\ (0,0) & \text{otherwise}\end{cases}, \qquad (e_{ij},0)\cdot(0,\overline{e}_{lm})=\begin{cases} (0,\overline{e}_{im}) & \text{if } j=l, \\ (0,0) & otherwise \end{cases}, \\ & (0,\overline{e}_{ij})\cdot (e_{lm},0)=\begin{cases} (0,\overline{e}_{ik}) & \text{if } j=l\\ (0,0) & \text{otherwise} \end{cases}, \qquad (0,\overline{e}_{ij})\cdot (0,\overline{e}_{lm})=(0,0)
\end{align}
with the appropriate inequality between the indices for a given equality. Let $\psi:A\to U_p(k)\oplus U_p^{+}(k)$ be the map given by $\psi(a_{ji})=(e_{i+1,j+1},0)$ for $0\leq i \leq j \leq p-1$ and $\psi(\overline{a}_{ji})=(0,\overline{e}_{i+1,j+1})$ for $0\leq i < j \leq p-1.$  By (\ref{equalities_triangmatrix}) we conclude that $\psi$ is an algebra homomorphism. Since $\psi$ maps a basis of $A$ onto a basis of $U_p(k)\oplus U_p^{+}(k)$, $\psi$ is an algebra isomorphism. Thus, we obtain $\operatorname{Ext}^*(S,S)\simeq A \simeq U_p(k)\oplus U_p^{+}(k),$ and the proof is complete.
\end{proof}
\begin{cor}
\label{derivedequiv_schurs}
There is an equivalence of triangulated categories $\mathcal{D}^b\mathcal{P}_p^{\varnothing} \simeq \mathcal{D}^b(\operatorname{Ext}^*(S,S)-\text{mod}^{\text{gr}}).$
\end{cor}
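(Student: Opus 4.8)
The plan is to exhibit $S=\bigoplus_{i=0}^{p-1}S_i$ as a classical generator of $\mathcal{D}^b\mathcal{P}_p^{\varnothing}$ and then to use the formality established in Theorem \ref{quasiiso_ext_schurs}(i) to pass from the derived endomorphism algebra of $S$ to its cohomology $\operatorname{Ext}^*(S,S)$. Concretely, the complex $\mathcal{T}=\bigoplus_i\mathcal{T}_i$ is a bounded complex of injectives quasi-isomorphic to $S$ by the minimal injective resolution (\ref{inj_res_schur}); since its target is a complex of injectives, $\operatorname{End}^*(\mathcal{T})$ is a DG model for $\operatorname{REnd}(S)$, with $H^*\operatorname{End}^*(\mathcal{T})\simeq\operatorname{Ext}^*(S,S)$. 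Writing $\Lambda=\operatorname{Ext}^*(S,S)$, I would then invoke the DG-enhanced Morita theory of Keller to get $\mathcal{D}^b\mathcal{P}_p^{\varnothing}\simeq\operatorname{per}(\operatorname{End}^*(\mathcal{T}))$, replace $\operatorname{End}^*(\mathcal{T})$ by $\Lambda$ via formality, and finally identify $\operatorname{per}(\Lambda)$ with $\mathcal{D}^b(\Lambda\text{-mod}^{\text{gr}})$.

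First I would check that $S$ generates $\mathcal{D}^b\mathcal{P}_p^{\varnothing}$ as a thick subcategory. Since $S_{p-1}=F_{p-1}$ and the short exact sequences (\ref{richardson_seq_dual}) present each $F_j$ as an extension built from $S_j$ and $F_{j+1}$, a downward induction on $j$ shows that the thick subcategory generated by $\{S_i\}$ contains every simple $F_j$; as the simples generate $\mathcal{D}^b$ of the finite-length category $\mathcal{P}_p^{\varnothing}$, so does $S$. Because $\mathcal{P}_p^{\varnothing}$ is a finite highest weight category it is quasi-hereditary, hence of finite global dimension, so $\mathcal{D}^b\mathcal{P}_p^{\varnothing}=\operatorname{per}(\mathcal{P}_p^{\varnothing})$ and $S$ is a compact classical generator. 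Keller's theorem then yields a triangle equivalence $\mathcal{D}^b\mathcal{P}_p^{\varnothing}\simeq\operatorname{per}(\operatorname{End}^*(\mathcal{T}))$ sending $S$ to $\operatorname{End}^*(\mathcal{T})$.

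Next, the quasi-isomorphism $\phi\colon\Lambda\to\operatorname{End}^*(\mathcal{T})$ of Theorem \ref{quasiiso_ext_schurs}(i) induces, by restriction and extension of scalars, a triangle equivalence $\operatorname{per}(\operatorname{End}^*(\mathcal{T}))\simeq\operatorname{per}(\Lambda)$, since derived categories of DG modules and their subcategories of perfect objects are invariant under quasi-isomorphism of DG algebras. It remains to identify $\operatorname{per}(\Lambda)$, where $\Lambda$ carries the zero differential and the internal grading coming from the $\operatorname{Ext}$-degree, with $\mathcal{D}^b(\Lambda\text{-mod}^{\text{gr}})$. For this I would use the standard regrading dictionary: a DG module over the graded algebra $(\Lambda,0)$ is the same as a complex of graded $\Lambda$-modules once one trades the single cohomological grading for the pair (homological degree, internal degree), and under this identification the perfect DG modules correspond to $\mathcal{K}^b(\Lambda\text{-proj}^{\text{gr}})$.

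The main obstacle is precisely this last identification, i.e.\ the bookkeeping of the two gradings and the passage from $\mathcal{K}^b(\Lambda\text{-proj}^{\text{gr}})$ to $\mathcal{D}^b(\Lambda\text{-mod}^{\text{gr}})$. The latter equality requires $\Lambda$ to have finite graded global dimension, which does not follow formally but can be read off from the explicit description in Theorem \ref{quasiiso_ext_schurs}(ii): presenting $\Lambda$ by a quiver with two arrows $i\to i+1$ (of internal degrees $1$ and $0$) subject to the relations encoded in (\ref{productschur_1})--(\ref{productschur_4}), one checks that the graded simple at the sink vertex $p-1$ is projective and that each syzygy of a graded simple is supported one vertex higher, so all graded simples have finite projective dimension. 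With finite graded global dimension in hand, $\operatorname{per}(\Lambda)=\mathcal{K}^b(\Lambda\text{-proj}^{\text{gr}})=\mathcal{D}^b(\Lambda\text{-mod}^{\text{gr}})$, and composing the three equivalences completes the proof.
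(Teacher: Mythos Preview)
Your argument is correct and follows the same overall architecture as the paper (compact generator, Keller's theorem, formality), but the execution differs in two places worth noting.

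First, the paper does not work with $\mathcal{T}$ directly but with its Kuhn dual $\mathcal{T}^{\#}$, which is a bounded complex of \emph{projectives} (a cofibrant object), and then invokes Keller's theorem in its standard form for cofibrant generators. This produces $\operatorname{End}^*(\mathcal{T}^{\#})\simeq\operatorname{End}^*(\mathcal{T})^{\mathrm{op}}$, hence after formality one lands in $\mathcal{D}^b(\operatorname{Ext}^*(S,S)^{\mathrm{op}}\text{-mod}^{\mathrm{gr}})$, and the paper removes the ${}^{\mathrm{op}}$ in a separate step using the duality $\mathcal{D}^b(\mathcal{C})\simeq\mathcal{D}^b(\mathcal{C}^{\mathrm{op}})$ for highest weight categories with duality. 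Your route with the injective resolution $\mathcal{T}$ is equally legitimate, but you should be aware that the left/right module convention is hiding exactly where the paper's ${}^{\mathrm{op}}$ appears; you are implicitly using that $\Lambda\text{-mod}$ and $\text{mod-}\Lambda$ have equivalent bounded derived categories, which is precisely what the paper's duality step buys.

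Second, where the paper simply cites Krause for the fact that the (co)standard object is a small generator and writes Keller's conclusion directly as $\mathcal{D}^b(-\text{-mod}^{\mathrm{gr}})$, you supply the generation argument by hand via (\ref{richardson_seq_dual}) and, more substantially, you explicitly verify that $\Lambda$ has finite graded global dimension in order to identify $\operatorname{per}(\Lambda)$ with $\mathcal{D}^b(\Lambda\text{-mod}^{\mathrm{gr}})$. This last check is a genuine ingredient that the paper leaves implicit; your quiver argument (syzygies of simples move one vertex toward the sink) is a clean way to see it and makes the proof more self-contained, at the cost of some extra work.
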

\begin{proof}
Let $\mathcal{T}^{\#}$ be the resolution obtained from $\mathcal{T}$ by using the Kuhn duality. Clearly $\mathcal{T}^{\#}$ is a cofibrant object in the category of complexes of $\mathcal{P}^{\varnothing}_p$. It is known that $\mathcal{T}^{\#}$ is a small generator of $\mathcal{D}^b\mathcal{P}^{\varnothing}_p$ (c.f. Lemma 5.5, \cite{krause2}). We consider the category $\mathcal{P}_p^{\varnothing}$ to be the DG category concentrated in degree $0.$ Then the equivalence $$\mathcal{D}^b\mathcal{P}^{\varnothing}_p\simeq \mathcal{D}^b(\operatorname{End}^*(\mathcal{T}^\#)-\text{mod}^{\text{gr}}) \simeq  \mathcal{D}^b(\operatorname{End}^*(\mathcal{T})^{\text{op}}-\text{mod}^{\text{gr}})$$ follows from (Theorem 8.2, \cite{keller}). By the statement (i) of Theorem \ref{quasiiso_ext_schurs} we obtain $$\mathcal{D}^b(\operatorname{End}^*(\mathcal{T})^{\text{op}}-\text{mod}^{\text{gr}})\simeq \mathcal{D}^b(\operatorname{Ext}^*(S,S)^{\text{op}}-\text{mod}^{\text{gr}})$$ (c.f. Example in Section 1.5, \cite{keller2}). Since $\mathcal{D}^b(\mathcal{C})\simeq \mathcal{D}^b(\mathcal{C}^{\text{op}})$ for a highest weight category $\mathcal{C}$ with duality (c.f. (1.5), \cite{cps_kazlus}), the following equivalence holds: $$\mathcal{D}^b(\operatorname{Ext}^*(S,S)^{\text{op}}-\text{mod}^{\text{gr}}) \simeq \mathcal{D}^b(\operatorname{Ext}^*(S,S)-\text{mod}^{\text{gr}}).$$Summarizing all the above equivalences, we obtain the assertion of the corollary.
\end{proof}
\subsection{The Yoneda algebra of simple functors}
The results of this subsection are similar to those in the previous subsection. Our main goal is to describe the structure of the Yoneda algebra of simple functors and to prove the DG formality of the endomorphism algebra whose cohomology algebra is the Yoneda algebra of simple functors.

Fix $0\leq i \leq p-1.$ We now define a double complex, which provides an injective resolution of the simple functor $F_i$. Set$$\mathcal{R}^{r,s}_i = \Omega^{i+s-r} \quad \text{if} \quad 0\leq r \leq p-1, \ 0\leq s \leq p-i-1 \text{ and } r-s \leq i .$$Let $(d^{r,s}_i)_v:\mathcal{R}^{r,s}_i \to \mathcal{R}^{r,s+1}_i$ (resp. $(d_i^{r,s})_h:\mathcal{R}^{r,s}_i \to \mathcal{R}^{r+1,s}_i$) be the de Rham differential $d_{i+s-r}$ (resp. the Koszul differential $\kappa_{i+s-r}$) for $r,s$ such that $\mathcal{R}^{r,s}_i,\mathcal{R}^{r,s+1}_i\neq 0$ (resp. $\mathcal{R}^{r,s}_i,\mathcal{R}^{r+1,s}_i\neq 0$). By (\ref{contractible_omegas}) it is easily seen that $(\mathcal{R}^{r,s}_i, (d_i^{r,s})_h, (d_i^{r,s})_v)_{r,s\in \mathbb{Z}}$ is a double complex. We denote it briefly by $\mathcal{R}_i.$ We note that in $\mathcal{R}_i$ the rows and columns are, respectively, the truncated Koszul and de Rham complexes. For example, the diagram below is $\mathcal{R}_{2}$ for $p=5.$
$$\begin{tikzcd}[cells={nodes={minimum height=2.5em}}]
\Omega^4 \arrow[r,"\kappa_4"] & \Omega^3 \arrow[r,"\kappa_3"] & \Omega^2 \arrow[r,"\kappa_2"] & \Omega^1 \arrow[r,"\kappa_1"] & \Omega^0 \\
\Omega^3 \arrow[r,"\kappa_3"] \arrow[u,"d_3"] & \Omega^2 \arrow[r,"\kappa_2"] \arrow[u,"d_2"] & \Omega^1 \arrow[r,"\kappa_1"] \arrow[u,"d_1"] & \Omega^0 \arrow[u,"d_0"] \\
\Omega^2 \arrow[r,"\kappa_2"] \arrow[u,"d_2"] & \Omega^1 \arrow[r,"\kappa_1"] \arrow[u,"d_1"] & \Omega^0 \arrow[u,"d_0"]
\end{tikzcd}$$
Let us consider the spectral sequence of $\mathcal{R}_i$ with respect to the vertical filtration. By the acyclicity of the Koszul complex we see that $$E^{s,t}_1=\begin{cases} K^{t+i}=S_{t+i} & \text{if } s=0 \text{ and } 0\leq t \leq p-i-1 \\ 0 & \text{otherwise} \end{cases}.$$ By (\ref{cartier_kernels}) we have $H^0(K^\bullet_p)=I^{(1)}=F_0$ and $H^j(K^\bullet_p)=0$ for $j>0$. Then by the formula for $E^{s,t}_1$ and (\ref{kernel_koszul_complex}) we obtain $$E^{s,t}_2=\begin{cases} F_i & \text{if } s=0, \ t=0 \\ 0 & \text{otherwise} \end{cases}.$$ In particular, this spectral sequence degenerates at $E_2$ and, in consequence, it easily follows that $$H^n(\operatorname{Tot}(\mathcal{R}_i))=\begin{cases} F_i & \text{if } n=0 \\ 0 & \text{otherwise}  \end{cases}.$$ Since each functor $\mathcal{R}^{r,s}_i$ is injective and a direct sum of injectives is injective, the last equality implies that $\operatorname{Tot}(\mathcal{R}_i)$ is an injective resolution of $F_i.$ By abuse of notation we use the same symbol $\mathcal{R}_i$ for $\operatorname{Tot}(\mathcal{R}_i)$.

Our next objective is to determine a basis of the Yoneda algebra of simple functors, whose elements are induced by the morphisms of total complexes from $\mathcal{R}_i$ to suitably shifted $\mathcal{R}_j$ defined as follows. For given  $0\leq i \leq j \leq p-1$ and $t$ such that $\operatorname{Ext}^t(F_i,F_j)\neq 0$ (c.f. Corollary \ref{ext_simples}) let $\alpha_{ji}^t$ be the map given by $(\alpha_{ji}^t)^{*,s}=\gamma_{s+\frac{1}{2}(t+i+j),i+s}$ for $0\leq s \leq p-1-\frac{1}{2}(t+j+i)$. Let us now assume that $0\leq j < i \leq p-1.$ Regarding the double complexes $\mathcal{R}_i$ and $\mathcal{R}_j$ as the complexes of vertical complexes, let $\alpha^{i-j}_{ji}$ be the natural inclusion and define $\alpha^t_{ji}=\alpha^{t-(i-j)}_{jj}\circ \alpha_{ji}^{i-j}$ for $t>i-j$ defined above. Less formal, if $i\leq j $ then $\alpha_{ji}^t$ embeds the $s$-th row of $\mathcal{R}_i$ into the $s+\frac{1}{2}(t-(j-i))$-th row of $\mathcal{R}_j,$ assuming the latter is non-zero. In case $i>j$ we compose $\alpha^{j-i}_{ji}$, which embeds $\mathcal{R}_i$ into $\mathcal{R}_j$, with $\alpha_{jj}^{t-(i-j)}.$ It is easy to check that the maps of total complexes induced by $\alpha_{ij}^t$ are morphisms of total complexes. We use for the simplicity the same symbol $\alpha_{ji}^t$ for that. We note that $\alpha^t_{ji}\in \operatorname{Hom}^t(\mathcal{R}_i,\mathcal{R}_j)$ as the morphism of complexes. Let us now show that these morphisms are not null-homotopic.
\begin{lem}
\label{notnull_simple}
The morphism of total complexes $\alpha^t_{ij}$ is not null-homotopic for any $0\leq i,j \leq p-1$ and $t$ such that $\operatorname{Ext}^t(F_i,F_j)\neq 0.$
\end{lem}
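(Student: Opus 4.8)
The strategy mirrors the proof of Lemma \ref{notnull_schur}: I would suppose that $\alpha^t_{ji}$ is null-homotopic, choose a convenient ``corner'' of the total complexes where the homotopy equation becomes a concrete identity between maps among the $\Omega^n$, and then derive a contradiction using Lemma \ref{hom_schur_omega} together with the relation $d\kappa+\kappa d=0$ from (\ref{contractible_omegas}). The point is that the total complex $\mathcal{R}_i$ is built from copies of $\Omega^n$, so every homotopy component is an element of some $\operatorname{Hom}(\Omega^a,\Omega^b)$, and Lemma \ref{hom_schur_omega} tells us that these Hom-spaces are at most one-dimensional, spanned by (a composite of) Koszul and de Rham differentials. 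This rigidity is exactly what forces the contradiction.

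First I would split into the two cases $i\leq j$ and $i>j$ corresponding to the two types of map. In the case $i\leq j$ the map $\alpha^t_{ji}$ is a shifted inclusion whose components are the $\gamma_{\bullet}$'s, and at the extreme corner of $\mathcal{R}_i$ — the term $\Omega^0$ sitting in the top-right of the $s=0$ row — the homotopy equation collapses, just as in Lemma \ref{notnull_schur}, to an identity of the form $\kappa_1\circ h = \mathrm{id}_{\Omega^0}$ for some $h\in\operatorname{Hom}(\Omega^0,\Omega^1)=\operatorname{Hom}(S_0,\Omega^1)=k$. Since any such $h$ is a scalar multiple of $d_0$ and $\kappa_1 d_0=0$ by (\ref{contractible_omegas}), this is impossible. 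For the case $i>j$, where $\alpha^t_{ji}=\alpha^{t-(i-j)}_{jj}\circ\alpha^{i-j}_{ji}$ factors through the inclusion $\mathcal{R}_i\hookrightarrow\mathcal{R}_j$, I would again locate the corner term $\Omega^0$ and reduce the homotopy relation either to $\kappa_1\circ h=\mathrm{id}$ (contradicted as above) or to an equation of the shape $\kappa_2\circ \bar h = d_0$ with $\bar h\in\operatorname{Hom}(\Omega^0,\Omega^2)=\operatorname{Hom}(S_0,\Omega^2)=0$, which forces $d_0=0$, a contradiction. In both cases the essential input is that the relevant corner Hom-space is $\operatorname{Hom}(S_0,\Omega^n)$, whose dimension is pinned down by Lemma \ref{hom_schur_omega}.

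The main obstacle I anticipate is bookkeeping rather than conceptual: because $\mathcal{R}_i$ is a double complex and $\alpha^t_{ji}$ is a morphism of \emph{total} complexes, a homotopy $h$ has components going between many pairs of summands, and the total differential mixes the horizontal Koszul maps $\kappa$ with the vertical de Rham maps $d$. I must therefore verify that, at the chosen corner, all but one or two of these components are forced to vanish (again by Lemma \ref{hom_schur_omega}, since most of the relevant $\operatorname{Hom}(\Omega^a,\Omega^b)$ are zero), so that the surviving homotopy equation is genuinely the scalar identity described above and not a more complicated linear combination that could conceivably be satisfied. Concretely, I would argue that the homotopy component landing in the corner can only involve the neighbouring terms $\Omega^1$ and $\Omega^2$ along the boundary, reducing the analysis precisely to the two identities treated in Lemma \ref{notnull_schur}. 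Once that reduction is made, the contradiction with (\ref{contractible_omegas}) is immediate, and the lemma follows.
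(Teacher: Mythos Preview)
Your plan is on the right track and matches the paper's approach in spirit: suppose the map is null-homotopic, examine the homotopy equation at the ``corner'' summand $\Omega^0$, use Lemma \ref{hom_schur_omega} to pin down the homotopy components, and contradict (\ref{contractible_omegas}). Two points are worth noting.

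First, the paper avoids your case split entirely by a factorisation trick you did not mention: one checks easily that $\alpha^{2p-i-j-2}_{ji}=\alpha_{jj}^{\,2p-i-j-2-t}\circ\alpha_{ji}^{t}$ for every admissible $t$, so it suffices to prove the \emph{maximal} map $\alpha^{2p-i-j-2}_{ji}$ is not null-homotopic. This buys a lot: at the maximal shift the target complex $\mathcal{R}_j[2p-i-j-2]$ has the single summands $\Omega^0$ and $\Omega^1$ in the two relevant degrees, so the ``bookkeeping'' you flag as the main obstacle disappears. In particular there is no need to distinguish $i\le j$ from $i>j$, and your guess that the second case might reduce to an equation of the form $\kappa_2\circ\bar h=d_0$ is not what actually occurs; both cases lead to the same $\kappa_1 d_0$--type identity.

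Second, your stated reduced equation ``$\kappa_1\circ h=\mathrm{id}_{\Omega^0}$'' is slightly too optimistic: the corner $\Omega^0$ in $\mathcal{R}_i$ still has an outgoing de Rham differential $d_0$, so the full homotopy identity has \emph{two} contributions, one from $h^i$ and one from $h^{i+1}\circ d_0$. The paper computes $\operatorname{Hom}(\Omega^m,\Omega^0)$ and $\operatorname{Hom}(\Omega^m,\Omega^1)$ (the latter via the short exact sequence (\ref{koszul_sh_seq})) to show that the only surviving components are $h^{i+1}=a\kappa_1$ and $h^i=b\kappa_2+c\,d_0$, whence the equation becomes $(a+c)\,\kappa_1 d_0=\mathrm{id}_{\Omega^0}$, contradicting (\ref{contractible_omegas}). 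Your outline would get there, but be sure to keep both homotopy terms.
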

\begin{proof}
Fix $0\leq i,j \leq p-1.$ By Corollary \ref{ext_simples} the maximal $t$ such that $\operatorname{Ext}^t(F_i,F_j)\neq 0$ is $t=2p-i-j-2.$ We see at once that $\alpha^{2p-i-j-2}_{ji}=\alpha_{jj}^{2p-i-j-2-t}\circ\alpha_{ji}^t$ for any $t<2p-i-j-2$ satisfying $\operatorname{Ext}^t(F_i,F_j)\neq 0.$ Thus, if $\alpha^{2p-i-j-2}_{ji}$ is not null-homotopic then so is $\alpha^t_{ji}$. Hence it suffices to show that $\alpha^{2p-i-j-2}_{ji}$ is not null-homotopic.

We suppose that $\alpha^{2p-i-j-2}_{ji}$ is null-homotopic. Let $h_{ji}=h^*_{ji}$ be a homotopy between $\alpha^{2p-i-j-2}_{ji}$ and the zero map. Then there is the following diagram, where the top row is the total complex $\mathcal{R}_i$ and the bottom row is the total complex $\mathcal{R}_j[2p-i-j-2].$
$$\begin{tikzcd}[column sep=5em,row sep=4em]
 & \ldots \arrow[r] & \bigoplus_{r+s=i} \mathcal{R}_i^{r,s} \arrow[r,"d_h+d_v"] \arrow[ld,"h_{ji}^i"] \arrow[d,"(\alpha^{2p-i-j-2}_{ji})^i"] & \bigoplus_{r+s=i+1} \mathcal{R}_i^{r,s}  \arrow[r] \arrow[ld,"h_{ji}^{i+1}"] & \ldots \\
\ldots \arrow[r] & \Omega^1 \arrow[r,"d_h+d_v=\kappa_1"] & \Omega^0 \arrow[r] & \ldots &
\end{tikzcd}$$
By definition, $(\alpha^{2p-i-j-2}_{ji})^i=id_{\Omega^0}.$ By Lemma \ref{hom_schur_omega} and the fact that $\Omega^r\simeq (\Omega^r)^{\#}$ for $1\leq r \leq p-1$ we have $\operatorname{Hom}(\Omega^m,\Omega^0)=\operatorname{Hom}(\Omega^m,S_0)=0$ for $1<m\leq p-1$ and $\operatorname{Hom}(\Omega^1,\Omega^0)=\operatorname{Hom}(\Omega^1,S_0)=k.$ It implies that $h^{i+1}_{ji}=a\cdot\kappa_1$ for some $a \in k.$ Now we apply the exact functor $\operatorname{Hom}(-,\Omega^1)$ to the short exact sequence (\ref{koszul_sh_seq}) for a fixed $m>1$. Then by Lemma \ref{hom_schur_omega} we have $$\operatorname{Hom}(\Omega^m,\Omega^1)=\operatorname{Hom}(S_m,\Omega^1)+\operatorname{Hom}(S_{m-1},\Omega^1)=\begin{cases}1 & \text{if } m=2 \\ 0 & \text{if } 2 < m\leq p-1 \end{cases}.$$ We also have, again by Lemma \ref{hom_schur_omega}, that $\operatorname{Hom}(\Omega^0,\Omega^1)=\operatorname{Hom}(S_0,\Omega^1)=k$. Thus, we conclude that $h^i_{ji}=b\cdot\kappa_2+c\cdot d_1$ for some $b,c\in k.$ Since $h_{ji}$ is a homotopy between $\alpha_{ji}^{2p-i-j-2}$ and the zero map, the above observations give us the equality
\begin{displaymath}
\kappa_1\circ(b\cdot \kappa_2 +c\cdot d_0)+(a\cdot\kappa_1)\circ(\kappa_2+d_0)=id_{\Omega^0}, \quad \text{that is,} \quad (a+c)\cdot \kappa_1  d_0 =id_{\Omega^0}.
\end{displaymath}
On the other hand, $(a+c)\cdot \kappa_1 d_0 = 0$ by (\ref{contractible_omegas}), contrary to the equality in the preceding sentence. In particular, $\alpha^{2p-i-j-2}_{ji}$ is not null-homotopic.
\end{proof}
The class of the map $\alpha^t_{ji}$ in $ \operatorname{Ext}^t(F_i,F_j)$ will be denoted by $[\alpha^t_{ji}].$ Set $F=\bigoplus_{0\leq i \leq p-1} F_i$. By Corollary \ref{ext_simples} and Lemma \ref{notnull_simple} we have
\begin{equation}
\label{Ext_simples_generators}
\operatorname{Ext}^*(F,F)=\operatorname{span}\{[\alpha^t_{ji}]:0\leq i,j \leq p-1 \text{ and } \ t=|i-j|+2r, \text{ where } 0\leq r \leq p-\max\{i,j\}-1 \}
\end{equation}
as the graded vector space. It easily follows from the definition of the maps $\alpha^t_{ij}$ that
\begin{equation}
\label{Hom_simples_multiplication}
\alpha^t_{ml}\cdot \alpha^u_{ji}=\begin{cases} \alpha^{u+t}_{mi} & \text{if } j=l \text{ and } u+t \leq 2p-i-m-2 \\ 0 & \text{otherwise} \end{cases}.
\end{equation}
This equality implies the equality in the algebra $\operatorname{Ext}^*(F,F)$:
\begin{equation}
\label{Ext_simples_multiplication}
[\alpha^t_{ml}]\cdot [\alpha^u_{ji}]=\begin{cases} [\alpha^{u+t}_{mi}] & \text{if } j=l \text{ and } u+t \leq 2p-i-m-2 \\ 0 & \text{otherwise} \end{cases}.
\end{equation}
In the next theorem we use the last equality to describe the structure of the Yoneda algebra of simple functors $\operatorname{Ext}^*(F,F).$ We also prove the DG formality of the DG algebra $\operatorname{End}^*(\mathcal{R})$ with $\mathcal{R}=\bigoplus_{0\leq i \leq p-1} \mathcal{R}_i$ defined in the similar way as $\operatorname{End}^*(\mathcal{T})$.

Let $B=\bigoplus_{t\in \mathbb{N}} B_t$ be the graded vector space with the components $$B_t=\operatorname{span}\{b^t_{ji}:0\leq i,j \leq p-1 \text{ such that } t=|i-j|+2r, \ \text{where} \ 0\leq r \leq p-\max\{i,j\}-1 \}$$ for $0\leq t \leq 2p-2$ and $B_t=0$ for $t\geq 2p-1$, where $b^t_{ij}$ are the formal symbols for $i,j,t$ satisfying the conditions given above. We define the multiplication on $B$ as follows: $$ b^t_{lm}\cdot b^u_{ji} = \begin{cases} b^{t+u}_{mi} & \text{if } j=l \text{ and } u+t \leq 2p-i-m-2\\ 0 & \text{otherwise} \end{cases}.$$
\begin{theorem}[]~
\label{quaiiso_ext_simples}
\begin{enumerate}[label=(\roman*)]
\item There is a graded algebra isomorphism $\operatorname{Ext}^*(F,F)\simeq B.$
\item There is a graded algebra isomorphism $\operatorname{Ext}^*(F_i,F_i)\simeq K[x]/(x^{p-i})$ for $0\leq i \leq p-1$  and $x$ of degree $2$. In particular, $\operatorname{Ext}^*(F_i,F_i)$ is a commutative algebra.
\item The algebra $\operatorname{End}^*(\mathcal{R})$ is a formal DG algebra, i.e. there exists a quasi-isomorphism of DG algebras $\eta:\operatorname{Ext}^*(F,F)\to \operatorname{End}^*(\mathcal{R}),$ where $\operatorname{Ext}^*(F,F)$ is the DG algebra with zero differential.
\end{enumerate}
\end{theorem}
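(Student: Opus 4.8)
The plan is to follow the template already established for the Schur functors in Theorem \ref{quasiiso_ext_schurs}, since every ingredient needed is in place: the dimension count of Corollary \ref{ext_simples}, the non-nullhomotopy of Lemma \ref{notnull_simple}, the spanning statement (\ref{Ext_simples_generators}), and the two multiplication formulas (\ref{Hom_simples_multiplication}) (at the chain level) and (\ref{Ext_simples_multiplication}) (in cohomology). Parts (i) and (iii) are then formal assemblies of these facts, and (ii) is a direct specialization of (i) to the diagonal.

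For (i), I would define the graded linear map $B\to\operatorname{Ext}^*(F,F)$ sending each formal symbol $b^t_{ji}$ to the class $[\alpha^t_{ji}]$. Since the multiplication on $B$ was defined precisely to mirror (\ref{Ext_simples_multiplication}), this map is a graded algebra homomorphism by construction. By (\ref{Ext_simples_generators}) it carries the basis of $B$ onto the basis $\{[\alpha^t_{ji}]\}$ of $\operatorname{Ext}^*(F,F)$, hence is a bijection and therefore a graded algebra isomorphism. For (ii) I would restrict this isomorphism to the summand with $i=j$: the surviving basis elements are $[\alpha^{2r}_{ii}]$ for $0\le r\le p-i-1$, concentrated one-dimensionally in the even degrees $0,2,\dots,2(p-i-1)$. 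Setting $x=[\alpha^2_{ii}]$ of degree $2$, formula (\ref{Ext_simples_multiplication}) gives $x^r=[\alpha^{2r}_{ii}]$ for $r\le p-i-1$ and $x^{p-i}=0$, because $2+2(p-i-1)>2p-2i-2$. This identifies $\operatorname{Ext}^*(F_i,F_i)$ with $K[x]/(x^{p-i})$, which is visibly commutative.

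For (iii) I would define $\eta\colon B\to\operatorname{End}^*(\mathcal{R})$ by $\eta(b^t_{ji})=\alpha^t_{ji}$. Because each $\alpha^t_{ji}$ is a morphism of total complexes, $\eta$ is a chain map (with $B$ carrying the zero differential), and the chain-level composition rule (\ref{Hom_simples_multiplication}) shows $\eta$ respects products, so $\eta$ is a DG algebra homomorphism. Since $\mathcal{R}$ is an injective resolution of $F$, we have $H^*(\operatorname{End}^*(\mathcal{R}))\simeq\operatorname{Ext}^*(F,F)$, while $H^*(B)=B$; the induced map $\eta_*$ is exactly the isomorphism of (i), so $\eta$ is a quasi-isomorphism. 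Precomposing $\eta$ with the inverse of the isomorphism in (i) yields the asserted quasi-isomorphism $\operatorname{Ext}^*(F,F)\to\operatorname{End}^*(\mathcal{R})$.

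The genuinely delicate point is not in this theorem but in the chain-level identity (\ref{Hom_simples_multiplication}) preceding it: it is what makes the formality strict, guaranteeing that the cocycles $\alpha^t_{ji}$ already span a subalgebra of $\operatorname{End}^*(\mathcal{R})$ isomorphic to its own cohomology, rather than only after passing to cohomology. The one subtlety to keep straight when granting (\ref{Hom_simples_multiplication}) is the bookkeeping of the row and column indices of the double complex $\mathcal{R}_i$—in particular that the truncation bound $u+t\le 2p-i-m-2$ is exactly the condition for the composed row-shift to stay within the nonzero range of $\mathcal{R}_m$—but this poses no real obstacle once the definitions of the maps $\alpha^t_{ji}$ are unwound.
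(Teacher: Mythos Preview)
Your proposal is correct and follows essentially the same approach as the paper: for (i) and (iii) you build the maps $b^t_{ji}\mapsto[\alpha^t_{ji}]$ and $b^t_{ji}\mapsto\alpha^t_{ji}$ and verify they are, respectively, a graded algebra isomorphism (via (\ref{Ext_simples_generators}) and (\ref{Ext_simples_multiplication})) and a DG quasi-isomorphism (via (\ref{Hom_simples_multiplication}) and the fact that the induced map on cohomology recovers (i)), and for (ii) you specialize to the diagonal and read off the truncated polynomial algebra. The paper proceeds identically, differing only in notation (it names the map in (iii) $\xi$ rather than $\eta$) and in phrasing (ii) via a surjection $K[x]\twoheadrightarrow B'$ with kernel $(x^{p-i})$ rather than directly exhibiting the powers of $x$.
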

\begin{proof}
Let us first prove (i). Let $\eta:B\to \operatorname{Ext}^*(F,F)$ be the map given by $\eta(b^t_{ji})=[\alpha^t_{ji}].$ It follows from (\ref{Ext_simples_multiplication}) that $\eta$ is a graded algebra homomorphism. By (\ref{Ext_simples_generators}) $\eta$ maps a basis of $B$ onto a basis of $\operatorname{Ext}^*(F,F).$ Therefore $\eta$ is a graded algebra isomorphism, as required.

Now we show (ii). Fix $0\leq i \leq p-1.$ We see that the isomorphism $\eta$ defined above restricts to the graded algebra isomorphism $$\operatorname{Ext}^*(F_i,F_i)\simeq \operatorname{span}\{b_{ii}^{2r}: 0\leq r \leq p-i-1\}=:B'.$$ Let us consider $K[x]$ as the graded algebra with $x$ of degree $2.$ We define $f: K[x]\to B'$ to be the map given on the basis by
\begin{displaymath}
f(1)=b_{ii}^0, \qquad f(x^r)=\begin{cases} b_{ii}^{2r} & \text{if } 1\leq r \leq p-i-1 \\ 0 & \text{if } r > p-i-1 \end{cases}.
\end{displaymath}
By the definition of $B'$ it is easily seen that $b_{ii}^0$ is the identity of $B'$ and $b_{ii}^{2r}=(b_{ii}^2)^r$ for $1\leq r \leq p-i-1.$ Thus, $f$ is a graded algebra homomorphism. It is a simple matter to check that $\ker f =(x^{p-i}),$ hence, by the first isomorphism theorem, we obtain $K[x]/(x^{p-i}) \simeq B' \simeq \operatorname{Ext}^*(F_i,F_i)$ as graded algebras. The second assertion of (ii) is the immediate consequence of the first one.

It remains to prove (iii). The differential on $B$ is defined to be the zero differential. Let $\xi:B \to \operatorname{End}^*(\mathcal{R})$ be the map given by $\xi(b^t_{ji})=\alpha^t_{ji}$ for $0\leq i,j \leq p-1$ and $t=|i-j|+2r$ where $0\leq r \leq p-\max\{i,j\}-1.$ By (\ref{Hom_simples_multiplication}) we see that $\xi$ is a graded algebra homomorphism and $\xi$ is a chain map, since $\xi(b^t_{ji})$ are morphisms of complexes. In particular, $\xi$ is a DG homomorphism. It is sufficient to show that $\xi$ is a quasi-isomorphism. By the definition of the differential on $B$ we have $H^*(B) \simeq B.$ We see at once that the induced homomorphism $\xi_*:H^*(B)\simeq B\to H^*(\operatorname{End}^*(\mathcal{R}))\simeq \operatorname{Ext}^*(F,F)$ is given by $\xi_*(b^t_{ji})=[\alpha^t_{ji}],$ i.e. $\xi_*=\eta,$ where $\eta$ was defined above. We showed that $\eta$ is a graded algebra isomorphism. Hence $\xi:B\simeq \operatorname{Ext}^*(F,F)\to \operatorname{End}^*(\mathcal{R})$ is a quasi-isomorphism.
\end{proof}
\begin{cor}
\label{derivedequiv_simples}
There is an equivalence of triangulated categories $\mathcal{D}^b\mathcal{P}_p^{\varnothing} \simeq \mathcal{D}^b(\operatorname{Ext}^*(F,F)-\text{mod}^{\text{gr}}).$
\end{cor}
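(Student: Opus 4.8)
The plan is to follow the proof of Corollary~\ref{derivedequiv_schurs} essentially verbatim, replacing the injective resolution $\mathcal{T}$ of $S$ by the total complex $\mathcal{R}$, which by the construction preceding Theorem~\ref{quaiiso_ext_simples} is an injective resolution of $F=\bigoplus_{0\leq i\leq p-1} F_i$, and invoking the formality statement (iii) of Theorem~\ref{quaiiso_ext_simples} in place of statement (i) of Theorem~\ref{quasiiso_ext_schurs}. The whole argument is a transport of structure through Keller's dg-Morita theory, with the formality as the only substantive input.

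First I would pass to the Kuhn dual $\mathcal{R}^{\#}$. Since the Kuhn duality fixes each simple functor, i.e. $F_i^{\#}\simeq F_i$, the complex $\mathcal{R}^{\#}$ is a cofibrant projective resolution of $F^{\#}\simeq F$. As $F$ is the direct sum of all the simples of the block $\mathcal{P}_p^{\varnothing}$, it is a generator, hence $\mathcal{R}^{\#}$ is a small generator of $\mathcal{D}^b\mathcal{P}_p^{\varnothing}$ (c.f. Lemma 5.5, \cite{krause2}). Regarding $\mathcal{P}_p^{\varnothing}$ as a DG category concentrated in degree $0$, Keller's theorem (Theorem 8.2, \cite{keller}) then yields
\[
\mathcal{D}^b\mathcal{P}_p^{\varnothing}\simeq \mathcal{D}^b(\operatorname{End}^*(\mathcal{R}^{\#})-\text{mod}^{\text{gr}})\simeq \mathcal{D}^b(\operatorname{End}^*(\mathcal{R})^{\text{op}}-\text{mod}^{\text{gr}}).
\]

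Next I would feed in the formality. By statement (iii) of Theorem~\ref{quaiiso_ext_simples} the DG algebra $\operatorname{End}^*(\mathcal{R})$ is quasi-isomorphic, via $\eta$, to $\operatorname{Ext}^*(F,F)$ equipped with the zero differential; since a quasi-isomorphism of DG algebras induces an equivalence of derived categories of graded modules (c.f. Example in Section 1.5, \cite{keller2}), this gives
\[
\mathcal{D}^b(\operatorname{End}^*(\mathcal{R})^{\text{op}}-\text{mod}^{\text{gr}})\simeq \mathcal{D}^b(\operatorname{Ext}^*(F,F)^{\text{op}}-\text{mod}^{\text{gr}}).
\]
Finally, because $\mathcal{P}_p^{\varnothing}$ is a highest weight category equipped with the Kuhn duality, one has $\mathcal{D}^b(\mathcal{C})\simeq \mathcal{D}^b(\mathcal{C}^{\text{op}})$ (c.f. (1.5), \cite{cps_kazlus}), which removes the $\text{op}$ and produces the asserted equivalence $\mathcal{D}^b\mathcal{P}_p^{\varnothing}\simeq \mathcal{D}^b(\operatorname{Ext}^*(F,F)-\text{mod}^{\text{gr}})$.

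I do not expect a genuine obstacle here, since the argument is structurally parallel to Corollary~\ref{derivedequiv_schurs} and the hard work is already done in Theorem~\ref{quaiiso_ext_simples}(iii). The only points I would take care to verify are bookkeeping ones: that $\mathcal{R}$ is a \emph{bounded} complex of injectives, which holds because each $\mathcal{R}_i$ is a finite double complex of the functors $\Omega^{\bullet}$, so that $\mathcal{R}^{\#}$ is genuinely cofibrant and compact; and that $F^{\#}\simeq F$ so that $\mathcal{R}^{\#}$ resolves the full direct sum of simples, guaranteeing the generator hypothesis of Keller's theorem. Everything else is formal.
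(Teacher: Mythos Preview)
Your proposal is correct and follows essentially the same approach as the paper. The paper's own proof simply notes that $\mathcal{R}^{\#}$ is cofibrant, argues that it is a small generator because $\mathcal{R}_i^{\#}\simeq F_i^{\#}\simeq F_i$ in $\mathcal{D}^b\mathcal{P}_p^{\varnothing}$ and hence every object is built from these by induction on the length of the composition series, and then declares the rest analogous to Corollary~\ref{derivedequiv_schurs}; your write-up makes those analogous steps explicit. One small remark: the paper invokes Lemma~5.5 of \cite{krause2} only in the Schur case and gives the direct composition-series argument for simples, so your parenthetical citation of that lemma here may be slightly misplaced, but your stated reason (the sum of all simples generates) is exactly the paper's argument and is independently sufficient.
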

\begin{proof}
Let $\mathcal{R}^{\#}$ be the resolution obtained from $\mathcal{R}$ by using the Kuhn duality. Clearly $\mathcal{R}^{\#}$ is a cofibrant object in the category of complexes of $\mathcal{P}_p^{\varnothing}$. Since $\mathcal{R}^{\#}_i\simeq F_i^{\#}\simeq F_i$ in $\mathcal{D}^b \mathcal{P}^{\varnothing}_p$ for $0\leq i \leq p-1,$ we conclude, by induction on the length of the composition series, that $\mathcal{R}^{\#}$ is a small generator of $\mathcal{D}^b\mathcal{P}^{\varnothing}_p.$ Then the proof is analogous to that of Corollary \ref{derivedequiv_schurs}.
\end{proof}
We also obtain the following corollary, which is by no means obvious from the explicit descriptions of the involved graded algebras.
\begin{cor}[]~\\
\label{derivedequiv_schursimple}
There is an equivalence of triangulated categories $\mathcal{D}^b(\operatorname{Ext}^*(S,S)-\text{mod}^{\text{gr}}) \simeq \mathcal{D}^b(\operatorname{Ext}^*(F,F)-\text{mod}^{\text{gr}}).$
\end{cor}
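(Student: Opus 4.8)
The plan is to deduce this equivalence purely formally from the two preceding corollaries, both of which exhibit $\mathcal{D}^b\mathcal{P}_p^{\varnothing}$ as a common pivot category. By Corollary \ref{derivedequiv_schurs} there is an equivalence of triangulated categories $\Phi:\mathcal{D}^b\mathcal{P}_p^{\varnothing}\xrightarrow{\sim}\mathcal{D}^b(\operatorname{Ext}^*(S,S)-\text{mod}^{\text{gr}})$, and by Corollary \ref{derivedequiv_simples} there is an equivalence $\Psi:\mathcal{D}^b\mathcal{P}_p^{\varnothing}\xrightarrow{\sim}\mathcal{D}^b(\operatorname{Ext}^*(F,F)-\text{mod}^{\text{gr}})$. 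The assertion should follow by transitivity, so the whole argument is a short composition of known equivalences rather than a fresh computation.

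Concretely, I would first invoke the standard fact that an equivalence of triangulated categories admits a quasi-inverse that is again an equivalence of triangulated categories; applying this to $\Phi$ yields $\Phi^{-1}:\mathcal{D}^b(\operatorname{Ext}^*(S,S)-\text{mod}^{\text{gr}})\xrightarrow{\sim}\mathcal{D}^b\mathcal{P}_p^{\varnothing}$. Then I would form the composite $\Psi\circ\Phi^{-1}$, which gives the desired equivalence $\mathcal{D}^b(\operatorname{Ext}^*(S,S)-\text{mod}^{\text{gr}})\xrightarrow{\sim}\mathcal{D}^b(\operatorname{Ext}^*(F,F)-\text{mod}^{\text{gr}})$, since a composition of exact equivalences is again an exact equivalence.

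The only point to verify — and essentially the sole content of the proof — is that the two corollaries produce equivalences with \emph{the same} intermediate category $\mathcal{D}^b\mathcal{P}_p^{\varnothing}$; inspection of their statements confirms this, so no genuine obstacle arises. I would emphasize that the phrase ``by no means obvious'' in the statement refers not to the derivation (which is a formal composition) but to the conclusion itself: the graded algebras $\operatorname{Ext}^*(S,S)$ and $\operatorname{Ext}^*(F,F)$, explicitly described in Theorems \ref{quasiiso_ext_schurs} and \ref{quaiiso_ext_simples}, look quite different, yet the above argument shows their bounded derived categories of graded modules are equivalent. Thus the proof is essentially one line, and no step presents a real difficulty.
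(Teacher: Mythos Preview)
Your proposal is correct and follows exactly the paper's own approach: the paper's proof is literally the one line ``The assertion of the corollary follows immediately from Corollaries \ref{derivedequiv_schurs} and \ref{derivedequiv_simples},'' which is precisely the composition-through-the-common-pivot $\mathcal{D}^b\mathcal{P}_p^{\varnothing}$ that you spell out. Your additional remarks about quasi-inverses and the meaning of ``by no means obvious'' are accurate elaborations but add nothing the paper doesn't already leave implicit.
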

\begin{proof}
The assertion of the corollary follows immediately from Corollaries \ref{derivedequiv_schurs} and \ref{derivedequiv_simples}.
\end{proof}
\section{The blocks of p-weight $1$}
Now we consider the case of the category $\mathcal{P}_e$ for $e>p$. Let $\mathcal{P}_{|\lambda|+p}^{\lambda}$ be the block of $\mathcal{P}_{|\lambda|+p}$ corresponding to a $p$-core $\lambda$. Our objective is to prove the following main theorem in this section. The principal significance of this theorem is that it provides a generalization of the results obtained in the previous sections to the case of $\mathcal{P}_e$ for $p < e < 2p.$
\begin{theorem}
\label{equivcat}
Assume that $e>p$ and fix a $p$-core $\lambda$. Let $\theta:\mathcal{P}_p^\varnothing \to \mathcal{P}_{|\lambda|+p}^{\lambda}$ be the functor given by $\theta(G) = \pi(S_\lambda \otimes G)$, where $\pi$ is the natural projection onto $\mathcal{P}_{|\lambda|+p}^{\lambda}$. Then $\theta$ is an equivalence of abelian categories. 
\end{theorem}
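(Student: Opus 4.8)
The plan is to show that $\theta$ is exact, that it matches up the highest weight data of the two blocks, and then to deduce the equivalence from a projective/injective-generator argument. First I would record that $\theta$ is exact: tensoring with the fixed functor $S_\lambda$ is exact, since over a field $(S_\lambda\otimes -)(V)=S_\lambda(V)\otimes_k(-)(V)$ is exact for every $V$, and the block projection $\pi$ is exact as projection onto a direct factor of an abelian category. I would also note that $\theta$ commutes with the Kuhn duality: since $(A\otimes B)^{\#}\simeq A^{\#}\otimes B^{\#}$ and $S_\lambda^{\#}=W_\lambda\simeq S_\lambda$ (the block of the $p$-core $\lambda$ has $p$-weight $0$, hence is semisimple, so $S_\lambda=W_\lambda=F_\lambda$ there), and since $(-)^{\#}$ preserves the partition labelling a functor and therefore preserves blocks, one gets $\theta(G^{\#})\simeq(\theta G)^{\#}$.

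The heart of the argument is the computation of $\theta$ on Schur functors. I would use that $S_\lambda\otimes S_i$ has a Schur filtration whose multiplicities are the Littlewood--Richardson coefficients $c^{\nu}_{\lambda,(i+1,1^{p-i-1})}$; applying $\pi$ retains exactly the factors $S_\nu$ with $\nu$ of $p$-core $\lambda$, i.e. those for which $\nu/\lambda$ is a $p$-ribbon. By the ribbon form of the Pieri rule, for a border strip $\nu/\lambda$ of size $p$ the coefficient $c^{\nu}_{\lambda,\mu}$ is nonzero only when $\mu$ is the hook whose leg length is the height of $\nu/\lambda$, and is then equal to $1$. As the $p$ partitions of $p$-core $\lambda$ and $p$-weight $1$ realise exactly the heights $1,\dots,p$, while the hooks $(i+1,1^{p-i-1})$ realise exactly the $p$ leg lengths, this yields a bijection $i\mapsto\nu_i$ onto the weights of $\mathcal{P}_{|\lambda|+p}^{\lambda}$ together with isomorphisms $\theta(S_i)\simeq S_{\nu_i}$ of multiplicity one. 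Dualising through $\theta\circ(-)^{\#}=(-)^{\#}\circ\theta$ gives $\theta(W_i)\simeq W_{\nu_i}$, and applying the exact functor $\theta$ to the canonical map $W_i\to S_i$, whose image is $F_i$, yields $\theta(F_i)\simeq F_{\nu_i}$; here one uses that the induced map on the one-dimensional space $\operatorname{Hom}(W_i,S_i)$ is nonzero, so its target is the canonical map. Thus $\theta$ carries the $p$ simple objects bijectively to the $p$ simple objects of the target, so it annihilates no nonzero object and is therefore faithful and length-preserving.

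To finish I would exploit that a tensor product of injectives in $\mathcal{P}$ is again injective: injectives are summands of the functors $S^{\mu}$, and $S^{\mu}\otimes S^{\nu}=S^{(\mu,\nu)}$. Hence each $\Omega^i\otimes S_\lambda$ is injective, so $\theta(\Omega^i)$ is injective in $\mathcal{P}_{|\lambda|+p}^{\lambda}$, and by the duality compatibility $\theta$ sends the projective covers $P_i=(\Omega^i)^{\#}$ of Proposition~\ref{injective_hull} to projectives. Consequently $\theta$ preserves standardly and costandardly filtered objects, and since $\theta$ transports a Schur (resp. Weyl) filtration of $Y$ (resp. $X$) to one of $\theta Y$ (resp. $\theta X$), the filtration multiplicities are preserved by construction. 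Using $\operatorname{Hom}(W_i,S_j)=\delta_{ij}k$ and $\operatorname{Ext}^{>0}(W_i,S_j)=0$ from $(\ref{ext_weyl_schur})$, which hold in the target block for the same reason, I would conclude that for $X$ standardly filtered and $Y$ costandardly filtered $\dim\operatorname{Hom}(\theta X,\theta Y)=\sum_\tau(X:W_\tau)(Y:S_\tau)=\dim\operatorname{Hom}(X,Y)$; together with faithfulness this makes $\theta$ an isomorphism on such Hom-spaces. Computing $\operatorname{Hom}(M,N)$ from a projective presentation of $M$ and an injective copresentation of $N$ then shows $\theta$ is fully faithful. Finally the $\theta(P_i)$ are now indecomposable (local endomorphism rings) projectives with distinct tops $F_{\nu_i}$, hence are all the indecomposable projectives of the target; realising any $N$ as a cokernel of a map between them and lifting that map through the full and faithful $\theta$ gives essential surjectivity. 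Therefore $\theta$ is an equivalence.

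I expect the main obstacle to be the Littlewood--Richardson step: one must verify that after projection $\theta(S_i)$ is a single Schur functor of multiplicity one, that $i\mapsto\nu_i$ is a bijection onto the weights of the target with the right combinatorics, and, crucially, that the canonical map $W_i\to S_i$ is not killed by $\theta$ so that $\theta(F_i)$ is a genuine nonzero simple rather than $0$. The Schur-filtration structure of $S_\lambda\otimes S_i$, the ribbon Pieri rule, and the duality compatibility are exactly what control these points; everything after the identification of $\theta$ on $S_i$, $W_i$ and $F_i$ is formal homological bookkeeping.
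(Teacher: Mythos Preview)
Your argument is correct and shares its first half with the paper: both establish exactness of $\theta$, compute $\theta(S_i)\simeq S_{\nu_i}$ via the Littlewood--Richardson rule restricted to the block (the paper does the Yamanouchi-word count directly, you invoke the ribbon Pieri rule), and obtain $\theta(W_i)\simeq W_{\nu_i}$ from $S_\lambda\simeq W_\lambda$ and Kuhn duality. The divergence is in the endgame. The paper packages the combinatorics into Lemma~\ref{comb_lemma}, proving that $i\mapsto\mu_i$ is a bijection \emph{and an order isomorphism} of weight posets, and then invokes a general criterion of Parshall--Scott (Theorem~(5.8) of \cite{ps_ottawa}) to the effect that an exact functor between highest weight categories which identifies standards and costandards compatibly with the poset structure is an equivalence. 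You instead run a direct projective-generator argument: preservation of injectives and projectives, full faithfulness from the dimension count $\dim\operatorname{Hom}(X,Y)=\sum_\tau(X{:}W_\tau)(Y{:}S_\tau)$ on $\Delta$-filtered against $\nabla$-filtered objects, and essential surjectivity from the fact that the $\theta(P_i)$ exhaust the indecomposable projectives of the target. This is sound, and it has the pleasant feature that it never uses the order-preservation half of Lemma~\ref{comb_lemma}; only the bijection is needed. Conversely, the paper's route is much shorter once the black box is accepted, and it isolates the combinatorial content cleanly in Lemma~\ref{comb_lemma}. One point you should not leave as an assertion: the claim that the $p$ partitions of $p$-weight~$1$ with core $\lambda$ realise each ribbon height exactly once is precisely the existence-and-uniqueness half of Lemma~\ref{comb_lemma}, and deserves the abacus argument the paper supplies.
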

We first prove the following lemma, which will be useful in the proof of Theorem \ref{equivcat}.
\begin{lem}
\label{comb_lemma}
For any $0\leq i \leq p-1$ there is exactly one Young diagram $\mu_i$ with $p$-core $\lambda$ such that $\lambda$ is obtained from $\mu_i$ by removing the rim $p$-hook corresponding to the $p$-hook $(i+1,1^{p-i-1})$. Moreover, the map $(i+1,1^{p-i-1}) \mapsto \mu_i$ is an order isomorphism between the poset of $p$-hooks and the poset of Young diagrams of $p$-weight $1$ with $p$-core $\lambda$.
\end{lem}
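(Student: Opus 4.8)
The plan is to argue entirely on James's abacus for the $p$-core $\lambda$. Fix a number of beads $N$, a large multiple of $p$, chosen so that the abacus display of $\lambda$ carries at least one bead on each of the $p$ runners; write $c_r\ge 1$ for the number of beads on runner $r$, and let $B$ be the beta-set of $\lambda$, so that (as $\lambda$ is a $p$-core) runner $r$ is occupied exactly at the positions $r,r+p,\dots,\beta_r$ with $\beta_r=r+(c_r-1)p$. Recall that removing a rim $p$-hook is the operation of sliding one bead up one position on its runner, and that a partition has $p$-core $\lambda$ and $p$-weight $1$ iff its abacus is that of $\lambda$ with a single bead slid \emph{down} by one position. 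As only the lowest bead on a runner can be slid down without a collision, there is exactly one such move per runner, giving $p$ pairwise distinct partitions $\mu^{(0)},\dots,\mu^{(p-1)}$, and these are all the weight-$1$ partitions with core $\lambda$.

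First I would compute the leg length of the rim $p$-hook attached on runner $r$. Sliding the lowest bead from $\beta_r$ to $\beta_r+p$, this leg length is the number of beads lying strictly between the two positions, $\ell(r)=\#\{b\in B:\beta_r<b<\beta_r+p\}$. Running through the $p-1$ intervening positions runner by runner, a short occupancy check identifies this with $\ell(r)=\#\{r':\beta_{r'}>\beta_r\}$. Since the $\beta_r$ are pairwise distinct (being pairwise incongruent mod $p$), the map $r\mapsto\ell(r)$ is a bijection of $\{0,\dots,p-1\}$ onto itself. The rim $p$-hook associated with the hook $(i+1,1^{p-i-1})$ has leg length $p-1-i$, so there is a unique runner $r$ with $\ell(r)=p-1-i$; setting $\mu_i:=\mu^{(r)}$ proves the first assertion.

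For the order statement I would show that increasing leg length corresponds to descending the (reversed-)dominance order, so that both posets are chains of length $p$ and the stated map is monotone. Reading dominance off beta-sets via partial sums of the decreasingly sorted beta-numbers, two partitions $\mu^{(r)},\mu^{(s)}$ with $\beta_r>\beta_s$ have beta-sets obtained from the common set $B\setminus\{\beta_r,\beta_s\}$ by adjoining the pairs $\{\beta_r+p,\beta_s\}$ and $\{\beta_s+p,\beta_r\}$ respectively; these pairs have equal sum, while the former has the strictly larger maximum $\beta_r+p$. A standard majorization argument then gives that the partial sums of $\mu^{(r)}$ dominate those of $\mu^{(s)}$, i.e. $\mu^{(r)}\trianglerighteq\mu^{(s)}$ exactly when $\beta_r>\beta_s$, equivalently when $\ell(r)<\ell(s)$. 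Hence the weight-$1$ partitions form a chain ordered by leg length, and since the hooks $(i+1,1^{p-i-1})$ form a chain ordered by $i$ (longer arm $=$ smaller leg $=$ more dominant), the assignment $(i+1,1^{p-i-1})\mapsto\mu_i$, matching arm $i$ with leg $p-1-i$, is a monotone bijection between two finite chains of the same length, hence an order isomorphism (and this is insensitive to whether one uses dominance or its reverse).

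I expect the main obstacle to be the order half rather than the counting: one must translate the purely numerical leg-length data into the dominance order cleanly, and the cleanest route is the majorization comparison of the two adjoined pairs above, which should be isolated as a small lemma. The occupancy check giving $\ell(r)=\#\{r':\beta_{r'}>\beta_r\}$ is routine but must be done carefully, since the intervening positions sit at level $c_r-1$ on the runners above $r$ and at level $c_r$ on those below.
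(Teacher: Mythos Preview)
Your proof is correct and follows the same abacus/bead-configuration framework as the paper: both identify the $p$ weight-$1$ partitions with the $p$ single-bead slides and read off the leg length of the added rim hook as the number of intervening beads (your formula $\ell(r)=\#\{r':\beta_{r'}>\beta_r\}$ is exactly the paper's observation that there are $p-i-1$ beads between the $(i{+}1)$-th movable bead and its target). The only real difference is in the order half: the paper checks $\mu_i\trianglerighteq\mu_{i+1}$ for adjacent indices by writing out both $\beta$-sequences explicitly and verifying the partial-sum inequalities case by case, whereas you compare arbitrary $\mu^{(r)},\mu^{(s)}$ at once via the majorization of equal-sum pairs merged into a common set. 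Your packaging is a bit slicker and avoids the index bookkeeping, but the mathematical content is the same.
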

\begin{proof}
We will use in the proof the terminology and results presented in (Section 2.7, \cite{james_kerber}). We define the $\beta$-sequence of the Young diagram $\mu=(\mu_1,\ldots,\mu_j,0,\ldots)$ by $\beta_l=\mu_l-l$ for $l\geq 1.$ We will often consider the bead configuration of a given Young diagram. We recall that sliding a bead one space up is equivalent to removing a rim $p$-hook. Since a Young diagram is uniquely determined by its $p$-core and $p$-quotient, it is clear that each diagram of $p$-weight $1$ is determined by sliding a bead one space down in the bead configuration of its $p$-core. If it is possible to remove a rim $p$-hook in a given diagram $\mu$ and $(\beta_j)$ is its $\beta$-sequence then $(\beta_1,\ldots,\beta_{l-1},\beta_{l+1},\ldots, \beta_{l+p-i-1},\beta_l-p,\beta_{l+p-i},\ldots)$ is the $\beta$-sequence of the diagram, which arises from $\mu$ by removing a rim $p$-hook corresponding to the hook $(i+1,1^{p-i-1})$ whose hand is in the $l$-th row. In particular, in this case there are exactly $p-i-1$ beads in the bead configuration of $\mu$ between the bead, which is slid one space up to remove the given rim $p$-hook, and the new position of that.

Fix $0\leq i \leq p-1.$ Now we claim that there is exactly one Young diagram $\mu_i$ of $p$-weight $1$ and with $p$-core $\lambda$ such that $\lambda$ arises from $\mu_i$ by removing the rim $p$-hook corresponding to $(i+1,1^{p-i-1})$. Indeed, let us consider the bead configuration of $\lambda.$ We observe that there are $p-i-1$ beads between the $(i+1)$-th bead with the empty space under it and this empty space, since each empty space between them corresponds to a bead with an empty space under it being before the bead under consideration and there are exactly $i$ such beads. Then the claim follows from the remark on $\beta$-sequences in the previous paragraph. Obviously, if $i \neq j$ then $\mu_i \neq \mu_j$. Thus, the map $(i+1,1^{p-i-1}) \mapsto \mu_i$ is a bijection.

It remains to prove that the map $(i+1,1^{p-i-1}) \mapsto \mu_i$ preserves the reversed dominance order. It suffices to show that $\mu_i$ dominates over $\mu_{i+1}$ for any $0\leq i \leq p-2.$ Let $l$ (resp. $m$) be the number of the row of $\mu_{i+1}$ (resp. $\mu_i$) containing the hand of the rim $p$-hook, which has to be removed to obtain $\lambda$. It follows from the construction of the map $(i+1,1^{p-i-1})\mapsto \mu_i$ that $l\leq m.$ Let $(\beta_j)$ (resp. $(\beta'_j)$) be the $\beta$-sequence of $\mu_{i+1}$ (resp. $\mu_i).$ It follows from the definition of $\beta$-sequence that $\mu_i$ dominates over $\mu_{i+1}$ if and only if $\sum_{r\leq s}\beta'_r \leq \sum_{r\leq s} \beta_r$ for any $s\geq 1.$ We see that $(\beta_1,\ldots,\beta_{l-1},\beta_{l+1},\ldots,\beta_{l+p-i-1},\beta_l-p,\beta_{l+p-i},\ldots)$ and $(\beta'_1,\ldots,\beta'_{m-1},\beta'_{m+1},\ldots,\beta'_{m+p-i}, \beta'_m-p,\beta'_{m+p-i+1},\ldots)$ are the both the $\beta$-sequence of $\lambda$ obtained by removing the rim $p$-hook in, respectively, $\mu_{i+1}$  and $\mu_i.$ Hence we have
\begin{equation*}
\begin{split}
& \beta_r=\beta'_r \ \text{ for } 1\leq r \leq l-1 \text{ and } m+1 \leq r \leq l+p-i-1 \text{ and } r\geq m+p-i+1,\\
& \beta_l=\beta'_{l+p-i}+p, \qquad \quad \beta_r=\beta'_{r-1} \ \text{ for } l+1\leq r \leq m, \\
& \beta_{m+p-i}=\beta'_m-p, \qquad \beta_r=\beta'_{r+1} \ \text{ for } l+p-i \leq r \leq m+p-i-1.
\end{split}
\end{equation*}
Then it is a simple matter to check that $\sum_{r\leq s}\beta'_r \leq \sum_{r\leq s} \beta_r$ for any $s\geq 1$. Thus, $\mu_i$ dominates over $\mu_{i+1}$, and the proof is complete.
\end{proof}
\begin{proof}[Proof of Theorem \ref{equivcat}]
It is clear that $\theta$ is an exact and additive functor. Now we show that $\theta(S_i)=S_{\mu_i}$ and $\theta(W_i)=W_{\mu_i}$ for any $0\leq i \leq p-1$, where $\mu_i$ is the Young diagram as in the proof of Lemma \ref{comb_lemma}. Fix $0\leq i \leq p-1.$ By the Littlewood-Richardson rule (c.f. \cite{boffi}) we obtain 
\begin{displaymath}
\label{littlewood_equiv}
\theta(S_i)=\pi(S_\lambda \otimes S_i)=\bigoplus_{\mu \in P_{\lambda,1}} c(i,\lambda ; \mu)S_\mu
\end{displaymath}
up to filtration, where $P_{\lambda,1}$ is the set of Young diagrams of $p$-weight $1$ with $p$-core $\lambda$ and $c(i,\lambda ; \mu)$ is the number of semistandard skew tableaux of shape $\mu / \lambda$ and of the content $(p-i,1^i)$ such that the word obtained by reading each column from the bottom up, starting from the left-most column and moving to the right column by column, is a word of Yamanouchi. We observe that a skew tableau as in the definition of $c(i,\lambda ; \mu)$ is of shape of a rim $p$-hook corresponding to the hook $(i+1,1^{p-i-1})$ and there is only one filling such that the corresponding word is a word of Yamanouchi. Indeed, it is obvious that $\mu / \lambda$ is a rim $p$-hook and it is easily seen that if the $j$-th column of such a skew tableau has only one box then this column is filled by the number $j$ and, otherwise, this column is filled by the sequence $(j,1,\ldots,1)$, reading from the bottom up. The picture below provides an example of such a skew tableau for $p=7, \ \mu/\lambda=(4,3,3,1)/(2,2)$.$$
\ytableausetup{notabloids}
\begin{ytableau}
\none & \none & 1 & 4 \\ \none & \none & 1 \\ 1 & 2 & 3 \\ 1
\end{ytableau}$$
Hence if $\lambda$ arises from $\mu$ by removing the rim $p$-hook corresponding to the $p$-hook $(i+1,1^{p-i-1})$ then $c(i, \lambda; \mu)=1$ and, otherwise, $c(i,\lambda; \mu)=0$. By Lemma \ref{comb_lemma} there is exactly one Young diagram $\mu=\mu_i$ such that $c(i, \lambda; \mu)=1$. Hence $\theta(S_i)=S_{\mu_i}$. Clearly $S_\lambda \simeq W_\lambda$, since $\lambda$ is a $p$-core. Then we have $$\theta(W_i)=S_\lambda\otimes W_i \simeq W_\lambda\otimes W_i=(S_i \otimes S_\lambda)^\#=S_{\mu_i}^\#=W_{\mu_i}.$$ The assertion of the theorem follows from (Theorem (5.8), \cite{ps_ottawa}).
\end{proof}
By the last theorem it is clear that we obtain $\operatorname{Ext}$-computations from those in Section $2$ by replacing $m$ by $\mu_m$ and $n$ by $\mu_n$. We record in the following corollary the another important and immediate consequences of Theorem \ref{equivcat} and the results of the previous sections. The last statement of the corollary follows from the easy observation that each diagram of weight $p < e < 2p$ is of $p$-weight at most $1.$
\begin{cor}
Let $\lambda$ be a $p$-core.
\begin{enumerate}[label=(\roman*)]
\item The decomposition matrix $D=(d_{lm})$ of the category $\mathcal{P}^\lambda_{|\lambda|+p}$ is given by
\begin{displaymath}
d_{lm}:=[W_{\mu_{l-1}}:F_{\mu_{m-1}}]=\begin{cases} 1 & \text{if } m=l \ \text{ or } m=l+1 \\ 0 & \text{otherwise} \end{cases}.
\end{displaymath} 
\item The category $\mathcal{P}^\lambda_{|\lambda|+p}$ has a Kazhdan-Lusztig theory. 
\item The endomorphism algebra whose cohomology algebra is the Yoneda algebra of Schur functors in $\mathcal{P}^\lambda_{|\lambda|+p}$, $\operatorname{Ext}^*(S,S)$ where $S=\bigoplus_{0\leq i \leq p-1} S_{\mu_i}$, is DG formal.
\item The endomorphism algebra whose cohomology algebra is the Yoneda algebra of simple functors in $\mathcal{P}^\lambda_{|\lambda|+p}$, $\operatorname{Ext}^*(F,F)$ where $F=\bigoplus_{0\leq i \leq p-1} F_{\mu_i}$, is DG formal.
\item There are the following equivalences of triangulated categories: $\mathcal{D}^b \mathcal{P}^\lambda_{|\lambda|+p} \simeq \mathcal{D}^b(\operatorname{Ext}^*(S,S)-\text{mod}^{\text{gr}})$, $ \mathcal{D}^b \mathcal{P}^\lambda_{|\lambda|+p} \simeq  \mathcal{D}^b(\operatorname{Ext}^*(F,F)-\text{mod}^{\text{gr}}), \  \mathcal{D}^b(\operatorname{Ext}^*(S,S)-\text{mod}^{\text{gr}}) \simeq \mathcal{D}^b(\operatorname{Ext}^*(F,F)-\text{mod}^{\text{gr}}).$
\item The above statements hold for any block of non-zero $p$-weight of the category $\mathcal{P}_e$ for $p < e < 2p.$ 
\end{enumerate}
\end{cor}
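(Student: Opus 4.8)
The plan is to deduce every assertion by transporting structure through the equivalence $\theta$ furnished by Theorem \ref{equivcat}. First I would record the behaviour of $\theta$ on the three distinguished families of objects. The proof of Theorem \ref{equivcat} already gives $\theta(S_i)=S_{\mu_i}$ and $\theta(W_i)=W_{\mu_i}$; since $\theta$ is an exact equivalence of abelian categories it preserves socles, and because $F_i=\operatorname{soc}(S_i)$ we obtain $\theta(F_i)=\operatorname{soc}(S_{\mu_i})=F_{\mu_i}$ for all $0\leq i\leq p-1$. Thus $\theta$ carries the highest weight structure of $\mathcal{P}_p^\varnothing$ to that of $\mathcal{P}_{|\lambda|+p}^\lambda$ in a manner compatible with the order isomorphism $(i+1,1^{p-i-1})\mapsto\mu_i$ of Lemma \ref{comb_lemma}. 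In particular every $\operatorname{Ext}$-group computed in Section 2 is preserved: $\operatorname{Ext}^q(F_{\mu_m},S_{\mu_n})\simeq\operatorname{Ext}^q(F_m,S_n)$, and similarly for the other seven families.

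With this dictionary in hand, parts (i) and (ii) are immediate. An exact equivalence preserves composition multiplicities, so $[W_{\mu_{l-1}}:F_{\mu_{m-1}}]=[W_{l-1}:F_{m-1}]$, and the value is read off from Corollary \ref{decomp_matrix}; this gives (i). For (ii) I would equip the poset $\{\mu_i\}$ with the length function pulled back along the order isomorphism, namely $\tilde l(\mu_i)=i$. Since the parity conditions defining a Kazhdan--Lusztig theory are phrased entirely in terms of $\operatorname{Ext}$-degrees between simples and (co)standards, and these transfer verbatim from $\mathcal{P}_p^\varnothing$, the conditions established in Corollary \ref{kltheory} hold in $\mathcal{P}_{|\lambda|+p}^\lambda$ relative to $\tilde l$. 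Part (vi) follows from the combinatorial remark already made in the text: a Young diagram of weight $e$ with $p<e<2p$ cannot accommodate two rim $p$-hooks, so its $p$-weight is at most $1$; hence every block of nonzero $p$-weight is of $p$-weight exactly $1$ and so has the form $\mathcal{P}_{|\lambda|+p}^\lambda$ with $|\lambda|=e-p$, whence (i)--(v) apply.

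For (iii) and (iv) I would transport the formality results. Since $\theta$ is an equivalence of abelian categories it preserves injectives, so $\theta(\mathcal{T})$ (resp.\ $\theta(\mathcal{R})$) is an injective resolution of $\bigoplus_i S_{\mu_i}$ (resp.\ $\bigoplus_i F_{\mu_i}$) in the block. Full faithfulness of $\theta$ yields an isomorphism of DG algebras $\operatorname{End}^*(\theta(\mathcal{T}))\simeq\operatorname{End}^*(\mathcal{T})$, and likewise for $\mathcal{R}$; formality of the right-hand sides, established in Theorems \ref{quasiiso_ext_schurs}(i) and \ref{quaiiso_ext_simples}(iii), therefore passes to the left-hand sides. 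Finally (v) follows either by repeating the arguments of Corollaries \ref{derivedequiv_schurs}, \ref{derivedequiv_simples} and \ref{derivedequiv_schursimple} in the block $\mathcal{P}_{|\lambda|+p}^\lambda$, or, more economically, by composing the derived equivalence $\mathcal{D}^b\mathcal{P}_{|\lambda|+p}^\lambda\simeq\mathcal{D}^b\mathcal{P}_p^\varnothing$ induced by $\theta$ with those corollaries, using that the relevant graded $\operatorname{Ext}$-algebras are isomorphic under $\theta$. The only step demanding genuine care is the formality transfer in (iii)--(iv): one must check that $\theta$ really induces an isomorphism of the endomorphism DG algebras of the resolutions, and not merely of their cohomology, which is exactly where full faithfulness (rather than just preservation of $\operatorname{Ext}$-groups) is used; everything else is formal transport of structure.
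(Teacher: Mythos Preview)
Your proposal is correct and follows the same approach as the paper: the paper simply declares all parts to be ``immediate consequences of Theorem \ref{equivcat} and the results of the previous sections,'' with the single additional remark that (vi) follows because every diagram of weight $p<e<2p$ has $p$-weight at most $1$. Your write-up is considerably more explicit (in particular, your justification that $\theta(F_i)=F_{\mu_i}$ via preservation of socles, and your care in transporting the DG algebra isomorphism rather than just its cohomology, are useful details the paper omits), but the underlying argument is identical.
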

It was showed in \cite{hty} that if two blocks of $\mathcal{P}$ have the same $p$-weight then they are derived equivalent (c.f. Theorem 5, \cite{hty}). Theorem \ref{equivcat} is a refinement of this result in the case of $p$-weight $1$. It turns out that the approach used in the proof of this theorem does not work in the case of $p$-weight greater than $1$.
\begin{exmp}
Let $p=2.$ Then $\mathcal{P}_4^\varnothing$ and $\mathcal{P}_5^{(1)}$ are blocks of $p$-weight $2.$ Let us consider the functor $\theta:\mathcal{P}^\varnothing_4 \to \mathcal{P}_5^{(1)}$ given by $\theta(G)=\pi(S_{(1)}\otimes G)$, where $\pi$ is the natural projection onto $\mathcal{P}_5^{(1)}$. By the Littlewood-Richardson rule we easily obtain $\theta(S_{(2,2)})=S_{(3,2)}\oplus S_{(2,2,1)}$ up to filtration. In particular, the image of $S_{(2,2)}$ under $\theta$ is not a Schur functor.
\end{exmp}
\section*{Acknowledgments}
I wish to express my gratitude to Marcin Chałupnik for suggesting the problem, many stimulating conversations and carefully reading of the preliminary version of this article. I also gratefully thank the referee for his/her valuable remarks, which improved this paper.

\small

\textsc{Patryk Jaśniewski, Institute of Mathematics, Warsaw University, Banacha 2, 02-097 Warsaw, Poland}

\textit{E-mail address}: \texttt{p.jasniewski@mimuw.edu.pl}

\begin{thebibliography}{99}
\bibitem{abw} K. Akin, D. Buchsbaum, J. Weyman, \emph{Schur Functors and Schur Complexes}, Adv. Math. 44 (1982), 207-278.
\bibitem{boffi} G. Boffi, \emph{The universal form of the Littlewood-Richardson rule}, Adv. Math. 68 (1988), 40-63.
\bibitem{cartier} P. Cartier, \emph{Une nouvelle opération sur les formes différentielles}, C.-R. Acad. Sci Paris 244 (1957), 426-428.
\bibitem{mc_adv} M. Chałupnik, \emph{Affine strict polynomial functors and formality}, Adv. Math. 320 (2017), 652-673.
\bibitem{mc_imrn} M. Chałupnik, \emph{Derived Kan Extension for Strict Polynomial Functors}, Int. Math. Res. Not. 20 (2015), 10017-10040.
\bibitem{cps_kazlus} E. Cline, B. Parshall, L. Scott, \emph{Abstract Kazhdan-Lusztig theories}, Tôhoku Math. J. 45 (1993), 511-534.
\bibitem{cps} E. Cline, B. Parshall, L. Scott, \emph{Finite dimensional algebras and highest weight categories}, J. reine angew. Math. 391 (1988), 85-99.
\bibitem{donkin} S. Donkin, \emph{On Schur Algebras and Related Algebras IV. The Blocks of Schur Algebras}, J. Algebra 168 (1994), 400-429.
\bibitem{ffss} V. Franjou, E. Friedlander, A. Scorichenko, A. Suslin, \emph{General linear and functor cohomology over finite fields}, Ann. of Math. 150 (1999), 663-728.
\bibitem{fls} V. Franjou, J. Lannes, L. Schwartz, \emph{Autour de la cohomologie de MacLane des corps finis}, Invent. math. 115 (1994), 513-538
\bibitem{fs} E. Friedlander, A. Suslin, \emph{Cohomology of finite group schemes over a field}, Invent. math. 127 (1997), 209-270.
\bibitem{hty} J. Hong, A. Touzé, O. Yacobi, \emph{Polynomial functors and categorifications of Fock space}, in: \emph{Symmetry: Representation Theory and Its Applications}, edited by R. Howe, M. Hunziker and J. Willenbring, 327-352, Birkhäuser, 2014. 
\bibitem{james_kerber} G. James, A. Kerber, \emph{The Representation Theory of the Symmetric Group}, Addison-Wesley Publishing Company, 1981.
\bibitem{jantzen} J. Jantzen, \emph{Representations of Algebraic Groups}, American Mathematical Society, 2003.
\bibitem{keller} B. Keller, \emph{Deriving DG categories}, Ann. scient. Éc. Norm. Sup. (4) 27 (1994), 63-102.
\bibitem{keller2} B. Keller, \emph{Tilting Theory and Differential Graded Algebras}, in: \emph{Finite dimensional algebras and related topics}, edited by V. Dlab and L. Scott, 183-190, Springer Netherlands, 1994.
\bibitem{krause2} H. Krause, \emph{Highest weight categories and recollements}, Ann. Inst. Fourier, Grenoble, 67, 6 (2017), 2679-2701.
\bibitem{krause} H. Krause, \emph{Highest weight categories and strict polynomial functors}, in: \emph{ Representation Theory – Current Trends and Perspectives}, edited by H. Krause, P. Littelmann, G. Malle, K. Neeb and Ch. Schweigert, 331–373, EMS Publishing House, 2017.
\bibitem{ps_ottawa} B. Parshall, L. Scott, \emph{Derived categories, quasi-hereditary algebras and algebraic groups}, Proc. of the Ottawa-Moosonee Workshop in Algebra 1987, Carleton University and Université d'Ottawa, 1988.
\bibitem{weyman} J. Weyman, \emph{Cohomology of Vector Bundles and Syzygies}, Cambridge University Press, 2003.
\end{thebibliography}
\end{document}